\date{\today}
\newtheorem{theorem}{Theorem}[section]
\newtheorem{proposition}[theorem]{Proposition}
\newtheorem{corollary}[theorem]{Corollary}
\newtheorem{lemma}[theorem]{Lemma}
\theoremstyle{definition}
\newtheorem{problem}[theorem]{Problem}
\newcommand\w{\omega}
\newcommand\C{\mathcal{C}}
\begin{document}

\title[On the lattice of weak topologies on the bicyclic monoid with adjoined zero]{On the lattice of weak topologies on the bicyclic monoid with adjoined zero}

\author[S.~Bardyla]{Serhii~Bardyla}
\thanks{The work of the author is supported by the Austrian Science Fund FWF (Grant  I
3709 N35).}
\address{S. Bardyla: Institute of Mathematics, Kurt G\"{o}del Research Center, Vienna, Austria}
\email{sbardyla@yahoo.com}

\author[O~Gutik]{Oleg~Gutik}
\address{Department of Mechanics and Mathematics,
National University of Lviv, Universytetska 1, Lviv, 79000, Ukraine}
\email{oleg.gutik@lnu.edu.ua, {ovgutik}@yahoo.com}

\subjclass[2010]{22A15, 06B23}

\keywords{Lattice of topologies, bicyclic monoid, shift-continuous topology.}

\begin{abstract}
A Hausdorff topology $\tau$ on the bicyclic monoid with adjoined zero $\mathcal{C}^0$ is called {\em weak} if it is contained in the coarsest inverse semigroup topology on $\mathcal{C}^0$.
We show that the lattice $\mathcal{W}$ of all weak shift-continuous topologies on $\mathcal{C}^0$ is isomorphic to the lattice $\mathcal{SIF}^1{\times}\mathcal{SIF}^1$ where $\mathcal{SIF}^1$ is a set of all shift-invariant filters on $\omega$ with an attached element $1$ endowed with the following partial order: $\mathcal{F}\leq \mathcal{G}$ iff $\mathcal{G}=1$ or $\mathcal{F}\subset \mathcal{G}$. Also, we investigate  cardinal characteristics of the lattice $\mathcal{W}$. In particular, we proved that $\mathcal{W}$ contains an antichain of cardinality $2^{\mathfrak{c}}$ and a well-ordered chain of cardinality $\mathfrak{c}$. Moreover, there exists a well-ordered chain of first-countable weak topologies of order type $\mathfrak{t}$.
\end{abstract}

\maketitle

\section{Introduction and preliminaries}
In this paper all topological spaces are assumed to be Hausdorff. The cardinality of a set $X$ is denoted by $|X|$.
Further we shall follow the terminology of \cite{Engelking-1989, Kunen, Lawson-2009}. By $\omega$ we denote the first infinite ordinal. The set of integers is denoted by $\mathbb{Z}$. By $\mathfrak{c}$ we denote the cardinality of the family of all subsets of $\omega$.

A semigroup $S$ is called \emph{inverse} if for every $x\in S$ there exists a unique $y\in S$ such that $xyx=x$ and $yxy=y$. Such an element $y$ is denoted by $x^{-1}$ and called the \emph{inverse of} $x$. The map which associates every element of an inverse semigroup to its
inverse is called an \emph{inversion}.

Given a semigroup $S$, we shall
denote the set of all idempotents of $S$ by $E(S)$.
A semigroup $S$ with an adjoined zero will be denoted by
$S^0$.

A {\em poset} is a set endowed with a partial order which is usually denoted by $\leq$. A poset $X$ is called {\em upper} ({\em lower}, resp.) {\em semilattice} if each finite subset of $X$ has a supremum (infimum, resp.). An upper (lower, resp.) semilattice $X$ is called {\em complete} if each subset of $X$ has a supremum (infimum, resp.). Elements $x,y$ of a poset $X$ are called {\em incomparable} if neither $x\leq y$ nor $y\leq x$.
For any poset $X$ and $x\in X$ put
$${\downarrow}x=\{y\in X\mid y\leq x\}, \qquad {\uparrow}x=\{y\in X\mid x\leq y\},\qquad{\downarrow}^\circ x={\downarrow}x\setminus \{x\},
\qquad{\uparrow}^\circ x={\uparrow}x\setminus \{x\}.$$
A poset $X$ is called a {\em lattice} if each finite non-empty subset of $X$ has infimum and supremum. A lattice $X$ is called {\em complete} if any non-empty subset of $X$ has infimum and supremum. A poset $X$ is called a {\em chain} if for any distinct $a,b\in X$ either $a\leq b$ or $b\leq a$. A chain $X$ is called {\em well-ordered} if any non-empty subset $A$ of $X$ contains the smallest element.

If $Y$ is a subset of a topological space $X$, then by $\overline{Y}$ we denote the closure of $Y$ in $X$.

A family $\mathcal{F}$ of subsets of a set $X$ is called a {\em filter} if it satisfies the following conditions:
\begin{itemize}
\item[$(1)$] $\emptyset\notin \mathcal{F}$;
\item[$(2)$] If $A\in \mathcal{F}$ and $A\subset B$ then $B\in \mathcal{F}$;
\item[$(3)$] If $A,B\in \mathcal{F}$ then $A\cap B\in\mathcal{F}$.
\end{itemize}
A family $\mathcal{B}$ is called a {\em base} of a filter $\mathcal{F}$ if for each element $A\in\mathcal{F}$ there exists an element $B\in\mathcal{B}$ such that $B\subset A$. A filter $\mathcal{F}$ is called {\em free} if $\cap_{F\in\mathcal{F}}=\emptyset$.

A \emph{semitopological} (\emph{topological}, resp.) \emph{semigroup} is a topological space together with a separately (jointly, resp.) continuous  semigroup operation. An inverse semigroup with continuous semigroup operation and inversion is called a \emph{topological inverse semigroup}.

A topology $\tau$ on a semigroup $S$ is called:
\begin{itemize}
  \item \emph{shift-continuous}, if $(S,\tau)$ is a semitopological semigroup;
  \item \emph{semigroup}, if $(S,\tau)$ is a topological semigroup;
  \item \emph{inverse semigroup}, if $(S,\tau)$ is a topological inverse semigroup.
\end{itemize}

The \emph{bicyclic monoid} $\mathcal{C}$ is a semigroup with the identity $1$ generated by two elements $p$ and $q$ subject to the condition $pq=1$.

The bicyclic monoid is isomorphic to the set $\w{\times}\w$ endowed with the following semigroup operation:
\begin{equation*}
    (a,b)\cdot (c,d)=
    \left\{
      \begin{array}{cl}
        (a+c-b,d), & \hbox{if~~} b\leq c;\\
        (a,d+b-c),   & \hbox{if~~} b>c;
      \end{array}
    \right.
\end{equation*}
The bicyclic monoid 
plays an important role in the algebraic theory of semigroups as well as in the
theory of topological semigroups. For example, the well-known
Andersen's result~\cite{Andersen-1952} states that a ($0$--)simple
semigroup with an idempotent is completely ($0$--)simple if and only if it does not
contain an isomorphic copy of the bicyclic semigroup. The bicyclic semigroup admits only the
discrete semigroup topology~\cite{Eberhart-Selden-1969}. In~\cite{Bertman-West-1976} this result was extended over the case of semitopological semigroups.
Compact topological semigroups cannot contain an isomorphic copy of the bicyclic monoid~\cite{Anderson-Hunter-Koch-1965}.
The problem of an embedding of the bicyclic monoid into compact-like topological semigroups was discussed in~\cite{Banakh-Dimitrova-Gutik-2009, Banakh-Dimitrova-Gutik-2010, BR, Gutik-Repovs-2007, Hildebrant-Koch-1988}.

However, it is natural to consider the bicyclic monoid with an adjoint zero $\mathcal{C}^0$.  It is well-known that the bicyclic monoid with an adjoined zero is isomorphic to the polycyclic monoid $\mathcal{P}_1$ which is isomorphic to the graph inverse semigroup $G(E)$ over the graph $E$ which consists of one vertex and one loop. The monoid $\mathcal{C}^0$ is a building block of $\alpha$-bicyclic monoids, polycyclic monoids and some graph inverse semigroups. For example:
\begin{theorem}[{\cite[Theorem 6]{Bardyla-2018(2)}}]\label{main}
Let a graph inverse semigroup $G(E)$ be a dense subsemigroup of a d-compact topological semigroup $S$. Then the following statements hold:
\begin{itemize}
\item[$(1)$] there exists a cardinal $k$ such that $E=(\sqcup_{\alpha\in k} E_{\alpha})\sqcup F$ where the graph
$F$ is acyclic and
for each $\alpha\in k$ the graph $E_{\alpha}$ consists of one vertex and one loop;
\item[$(2)$] if the graph $F$ is non-empty, then the semigroup $G(F)$ is a compact subset of $G(E)$;
\item[$(3)$] each open neighborhood of $0$ contains all but finitely many subsets $G(E_{\alpha})\subset G(E)$, $\alpha\in k$.
\end{itemize}
\end{theorem}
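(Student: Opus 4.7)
The plan is to analyze the graph $E$ by separating its cyclic and acyclic parts, exploiting the fact that each cycle in $E$ contributes a bicyclic-like subsemigroup to $G(E)$ and that d-compactness of $S$ severely restricts how such subsemigroups can sit densely inside $S$.

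For part (1) I would fix a cycle $c=e_1e_2\cdots e_n$ in $E$ based at a vertex $v$ and consider the strictly descending chain of idempotents $c^k(c^k)^{-1}$ in $G(E)$. Density of $G(E)$ in the d-compact $S$ forces this chain to accumulate at some limit in $S$. A case analysis on the configuration of cycles would then rule out every possibility other than an isolated loop at a vertex: if $n\geq 2$, or if two cycles share a vertex, or if the base vertex has additional incident edges, one can extract an infinite discrete inverse subsemigroup whose required closure in $S$ contradicts d-compactness. Collecting the surviving single-loop vertices gives the components $E_\alpha$ and leaves an acyclic subgraph $F$.

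For part (2), since $F$ is acyclic, $G(F)$ admits no infinite descending chain of idempotents. I would show that the closure of $G(F)$ inside $S$ cannot pick up new elements by using the multiplicative orthogonality between the components of the decomposition from part (1): any accumulation point $y\in\overline{G(F)}\setminus G(F)$ must satisfy $yy^{-1}=0$ and $y^2=0$ by joint continuity of multiplication and inversion, forcing $y=0$. Combined with d-compactness this makes $G(F)$ a compact subspace of $S$.

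For part (3), let $U$ be an open neighborhood of $0$ and suppose toward contradiction that infinitely many $G(E_\alpha)$ meet $S\setminus U$. Choosing $x_\alpha\in G(E_\alpha)\setminus U$ produces an infinite set $A\subseteq S\setminus U$. Since elements of distinct $G(E_\alpha)$ multiply to $0$, joint continuity forces every accumulation point of $A$ in $S$ to equal $0$; but the closed set $S\setminus U$ excludes $0$, and d-compactness of $S$ guarantees that $A$ does have an accumulation point, a contradiction.

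The main obstacle will be the combinatorial case analysis in part (1): graph inverse semigroups generated by longer cycles, by cycles sharing a vertex, or by a cycle attached to other edges can produce quite intricate subsemigroups, and each forbidden configuration needs its own argument showing that a dense embedding into a d-compact semigroup is impossible.
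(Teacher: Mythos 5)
First, a point of order: this paper does not prove Theorem~\ref{main} at all. The statement is imported verbatim from \cite{Bardyla-2018(2)} (where it is Theorem~6) and is quoted here only as motivation for studying topologies on $\mathcal{C}^0$; the notion of d-compactness is not even defined in the present text. So there is no in-paper proof to measure your proposal against, and any argument you supply has to be checked against the cited preprint rather than against this paper.

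On its merits, your outline has concrete gaps. In parts (2) and (3) you invoke ``joint continuity of multiplication and inversion'' and work with $y^{-1}$ for an accumulation point $y\in S\setminus G(E)$, but $S$ is only assumed to be a topological semigroup: it need not be an inverse semigroup, so $y^{-1}$ need not exist, let alone vary continuously. What joint continuity of multiplication alone gives (via a diagonal net argument from $x_\alpha x_\beta=0$ for $\alpha\neq\beta$) is $y^2=0$, and that does not force $y=0$, since a topological semigroup may contain nonzero nilpotents; you need a replacement such as translating $y$ by the vertex idempotents of the components $E_\alpha$, in the spirit of the translation trick in Proposition~\ref{hcl}. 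Your justification for (2) also rests on the claim that acyclicity of $F$ rules out infinite descending chains of idempotents in $G(F)$, which is false: an infinite ray $v_0\to v_1\to v_2\to\cdots$ is acyclic yet yields the strictly descending chain $uu^{-1}$ over ever longer initial paths $u$. Finally, for part (1) you only announce a case analysis (longer cycles, cycles sharing a vertex, extra edges at a loop vertex) and defer all of it; that analysis is precisely where the content of the theorem lies, and each case must be reduced to a statement about dense copies of the bicyclic monoid in d-compact semigroups, which in turn requires fixing the definition of d-compactness that your accumulation-point arguments silently assume. As written, the proposal is a plan rather than a proof.
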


In~\cite{Bardyla-2017(2)} it was proved the following:

\begin{theorem}[{\cite[Theorem 1]{Bardyla-2017(2)}}]
Each graph inverse semigroup $G(E)$ is a subsemigroup of the polycyclic monoid $\mathcal{P}_{|G(E)|}$.
\end{theorem}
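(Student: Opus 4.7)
The plan is to construct an explicit injective semigroup homomorphism $\phi\colon G(E)\hookrightarrow\mathcal{P}_{|G(E)|}$. The conceptual difficulty is that any two generators of $\mathcal{P}_\kappa$ may be concatenated into a nonzero word, whereas in $G(E)$ two edges $e,f$ compose nontrivially only when $r(e)=s(f)$. So the embedding must artificially encode this vertex-matching constraint by ``sandwiching'' each edge generator between a pair of \emph{vertex tokens} drawn from the abundance of generators supplied by $\mathcal{P}_{|G(E)|}$.

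\textbf{The construction.} Set $\kappa=|G(E)|$. Since $G(E)$ contains all vertices and all edges as pairwise distinct elements, $|E^0|+|E^1|\leq\kappa$, so the generators of $\mathcal{P}_\kappa$ may be split into two disjoint families $\{p_e:e\in E^1\}$ and $\{q_v:v\in E^0\}$ (any remaining generators are unused). Define $\phi$ on the natural generators of $G(E)$ by
\[
\phi(v)=q_v q_v^{-1}\quad\text{for every vertex }v,\qquad \phi(e)=q_{s(e)}\,p_e\,q_{r(e)}^{-1}\quad\text{for every edge }e,
\]
and extend multiplicatively; note that automatically $\phi(e^{-1})=q_{r(e)}\,p_e^{-1}\,q_{s(e)}^{-1}$.

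\textbf{Verification.} The bulk of the argument is checking that $\phi$ respects the defining relations of the graph inverse semigroup: orthogonality of vertices ($vw=\delta_{vw}v$), the unitality identities $s(e)\cdot e=e\cdot r(e)=e$, and the crucial $e^{-1}e=r(e)$, $e^{-1}f=0$ for $e\neq f$. Each verification reduces to the fundamental polycyclic cancellation rules $q_v^{-1}q_w=\delta_{vw}$ and $p_e^{-1}p_f=\delta_{ef}$. A short induction on path length then yields the closed form $\phi(\alpha)=q_{s(\alpha)}\,p_\alpha\,q_{r(\alpha)}^{-1}$, where $p_\alpha:=p_{e_1}\cdots p_{e_n}$ for $\alpha=e_1\cdots e_n$; hence $\phi(\alpha\beta^{-1})=q_{s(\alpha)}\,p_\alpha p_\beta^{-1}\,q_{s(\beta)}^{-1}$ whenever $r(\alpha)=r(\beta)$.

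\textbf{Main obstacle.} The technical heart of the proof is verifying $\phi(\alpha\beta^{-1})\cdot\phi(\gamma\delta^{-1})=\phi(\alpha\beta^{-1}\cdot\gamma\delta^{-1})$ in every case that arises from the graph inverse semigroup product: $\gamma=\beta\eta$, $\beta=\gamma\eta$, or neither prefix relation holds. One must track how the middle factor $q_{s(\beta)}^{-1}q_{s(\gamma)}$ first enforces equality of the starting vertices of $\beta$ and $\gamma$, and how the polycyclic cancellation applied to $p_\beta^{-1}p_\gamma$ then either produces the correct remainder path or collapses to $0$ precisely when the product in $G(E)$ does. Once this case analysis is complete, injectivity is immediate: distinct elements $\alpha\beta^{-1}\in G(E)$ yield distinct reduced words $uv^{-1}$ in $\mathcal{P}_\kappa$, since the vertex tokens $q_{s(\alpha)}, q_{s(\beta)}^{-1}$ and the edge word $p_\alpha p_\beta^{-1}$ together determine $\alpha$ and $\beta$ uniquely.
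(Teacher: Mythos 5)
The paper does not prove this statement: it is imported verbatim from \cite{Bardyla-2017(2)} as background, so there is no in-paper proof to compare against. Judged on its own merits, your construction is correct and is of the same general flavour as the one in the cited source: both work by encoding each path of $E$ as a word in the free monoid on the generators of $\mathcal{P}_{|G(E)|}$ in such a way that two paths are prefix-comparable iff their code words are, and then using the normal form $uv^{-1}$ in the polycyclic monoid. Your ``vertex token'' encoding $\alpha\mapsto q_{s(\alpha)}p_{e_1}\cdots p_{e_n}$ is an economical way to do this (it uses only $|E^0|+|E^1|\leq|G(E)|$ generators), and all the key computations go through: $q_v^{-1}q_w=\delta_{vw}$ enforces vertex matching, $p_e^{-1}p_f=\delta_{ef}$ enforces edge matching, and the unique representation of nonzero elements of $\mathcal{P}_\kappa$ as reduced words $uv^{-1}$ gives injectivity. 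One simplification you could make: since $G(E)$ is by definition the universal inverse semigroup with zero on $E^0\sqcup E^1$ subject to the relations $vw=\delta_{vw}v$, $s(e)e=er(e)=e$, $e^{-1}f=\delta_{ef}r(e)$, your verification of these relations already yields a well-defined homomorphism, so the ``main obstacle'' case analysis for products $\phi(\alpha\beta^{-1})\phi(\gamma\delta^{-1})$ is not actually needed for the homomorphism property --- only the closed form $\phi(\alpha\beta^{-1})=q_{s(\alpha)}p_\alpha p_\beta^{-1}q_{s(\beta)}^{-1}$ is needed, and only for injectivity. As written, that case analysis is sketched rather than carried out, but since it is dispensable this is not a gap.
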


This result leads us to the following problem:

\begin{problem}[{\cite[Question 1]{Bardyla-2017(2)}}]
Is it true that each semitopological (topological, topological inverse, resp.) graph inverse semigroup $G(E)$ is a subsemigroup of a semitopological (topological, topological inverse, resp.) polycyclic monoid $\mathcal{P}_{|G(E)|}$.
\end{problem}

So, to solve this problem we need to know more about a topologization of polycyclic monoids and their subsemigroups.
In recent years it was actively investigated locally compact shift-continuous and semigroup topologies on polycyclic monoids and graph inverse semigroups. For instance, in~\cite{Gutik-2015} it was proved that a Hausdorff locally compact semitopological bicyclic semigroup with an adjoined zero $\C^0$ is either compact or discrete. In~\cite{Bardyla-2016} and \cite{Bardyla-2018(1)} this result was extended for polycyclic monoids and graph inverse semigroups over strongly connected graphs with finitely many vertices, respectively. Similar dichotomy also holds for other generalizations of the bicyclic monoid (see~\cite{Bardyla-2018,Gutik-2018,Mokr}).

There exists the coarsest inverse semigroup topology $\tau_{\min}$ on $\C^0$ which is defined as follows: elements of $\C$ are isolated and
the family $\mathcal{B}_{\min}(0)=\{C_n\colon n\in\omega\}$, where $C_n=\{0\}\cup\{(k,m)\mid k,m>n)\}$, forms an open neighborhood base at $0$ of the topology $\tau_{\min}$ (see~\cite[Theorem 3.6]{Bardyla-2017(2)}).

Let $X$ be any semigroup. By $\mathcal{SCT}(X)$ we denote the set of all Hausdorff shift-continuous topologies on $X$ endowed with the following natural partial order: $\tau_1\leq \tau_2$ iff $\tau_1\subset \tau_2$.
For any topologies $\tau_1$, $\tau_2$ on $X$ by $\tau_1{\vee}\tau_2$ we denote a topology on $X$ which subbase is $\tau_1\cup\tau_2$.

\begin{lemma}\label{l0}
For any semigroup $X$, the poset $\mathcal{SCT}(X)$ is a complete upper semilattices. Moreover, if the poset $\mathcal{SCT}(X)$ contains the least element then $\mathcal{SCT}(X)$ is a complete lattice.
\end{lemma}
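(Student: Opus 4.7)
My plan is to build the supremum of an arbitrary non-empty family as the topology generated by their union as a subbase, and, when a least element is present, to produce infima via suprema of lower bounds.

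Given a family $\{\tau_i : i \in I\} \subseteq \mathcal{SCT}(X)$, I would let $\tau$ denote the topology on $X$ having $\bigcup_{i\in I}\tau_i$ as a subbase, extending the binary join $\vee$ introduced just before the lemma. Since $\tau$ refines every $\tau_i$ and Hausdorffness is preserved under refinement, $\tau$ is automatically Hausdorff. To check that $\tau$ is shift-continuous, I would appeal to the standard subbase criterion for continuity: for every $a \in X$ and every subbasic open set $U \in \tau_i$, shift-continuity of $\tau_i$ yields $\lambda_a^{-1}(U),\, \rho_a^{-1}(U) \in \tau_i \subseteq \tau$, so both the left and right translations are $\tau$-continuous. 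Moreover, any shift-continuous Hausdorff topology $\sigma$ that contains every $\tau_i$ must contain $\bigcup_{i\in I}\tau_i$, and hence the topology $\tau$ that this family generates. This shows $\tau = \sup\{\tau_i\}$ in $\mathcal{SCT}(X)$, establishing the complete upper semilattice claim.

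For the second statement, assume $\mathcal{SCT}(X)$ has a least element $\tau_{\bot}$ and fix a non-empty family $\{\tau_i\}_{i \in I}$. Let $L \subseteq \mathcal{SCT}(X)$ be the set of all common lower bounds of $\{\tau_i\}$; it is non-empty since $\tau_{\bot} \in L$. By the first part, $\sigma := \sup L$ exists in $\mathcal{SCT}(X)$. Because each $\rho \in L$ satisfies $\rho \subseteq \tau_i$ for every $i$, the subbase $\bigcup_{\rho \in L} \rho$ defining $\sigma$ is itself contained in each $\tau_i$; hence $\sigma \subseteq \tau_i$ for every $i$, and so $\sigma \in L$. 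As $\sigma$ dominates every element of $L$, it is the greatest lower bound of $\{\tau_i\}$, so every non-empty subset has both a supremum and an infimum, giving the complete lattice property.

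The whole argument is elementary bookkeeping with the orders $\subseteq$ and $\leq$; the single substantive step is the subbase characterisation of continuity in the second paragraph, which is what lets one certify shift-continuity of the join without having to describe basic open sets of $\tau$ explicitly. No deeper obstacle seems to arise.
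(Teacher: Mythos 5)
Your proof is correct and follows essentially the same route as the paper: the supremum is the topology generated by the union of the family as a subbase, shift-continuity of the join is verified from shift-continuity of each member (you via the subbase criterion for continuity of translations, the paper by handling finite intersections of subbasic neighbourhoods explicitly --- the same computation), and the infimum is obtained as the supremum of the set of common lower bounds, which is non-empty thanks to the least element and is itself a lower bound since its subbase lies in each $\tau_i$. No issues.
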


\begin{proof}
Let $T=\{\tau_{\alpha}\}_{\alpha\in A}$ be an arbitrary subset of $\mathcal{SCT}(X)$. Obviously, the topology $\tau$ which is generated by the subbase $\mathcal{B}_{\tau}=\cup T$ is the supremum of $T$ in the lattice $\mathcal{T}$ of all topologies on $X$. Since $\mathcal{SCT}(X)$ is a subposet of $\mathcal{T}$, it is sufficient to show that $\tau\in \mathcal{SCT}(X)$. Fix an arbitrary $x,y\in X$ and a basic open neighborhood $U$ of $xy$ in the topology $\tau$. Then there exist a finite subset $\{\alpha_1,\ldots, \alpha_n\}\subset A$ and open neighborhoods $U_{\alpha_1}\in \tau_{\alpha_1},\ldots, U_{\alpha_n}\in \tau_{\alpha_n}$ of $xy$ such that $\cap_{i=1}^nU_{\alpha_i}\subset U$. Since for every $i\leq n$ the topology $\tau_{\alpha_{i}}$ is shift-continuous there exists an open neighborhood $V_{\alpha_i}\in \tau_{\alpha_i}$ of $y$ such that $x\cdot V_{\alpha_i}\subset U_{\alpha_i}$. Then $V=\cap_{i=1}^nV_{\alpha_i}\in \tau$ and $x\cdot V\subset U$. Analogously it can be shown that there exists an open neighborhood $W\in \tau$ of $x$ such that $W\cdot y\subset U$. Hence $\tau\in \mathcal{SCT}(X)$.

Assume that the poset $\mathcal{SCT}(X)$ contains the least element $\upsilon$ and $T=\{\tau_{\alpha}\}_{\alpha\in A}$ is an arbitrary subset of $\mathcal{SCT}(X)$.
The set $S=\cap_{\alpha\in A}{\downarrow}\tau_{\alpha}$ is non-empty, because it contains $\upsilon$. The above arguments imply that there exists a topology $\tau$ such that $\tau=\sup S$. The topology $\tau$ is generated by the subbase $\cup S$. Fix any $\alpha\in A$ and a basic open set $U=\cap_{i=1}^nV_i\in \tau$ where $V_i\in \tau_i\in S$, $i\leq n$. Then $V_i\in \tau_{\alpha}$ for each $i\leq n$ witnessing that $U\in \tau_{\alpha}$. Hence $\tau\subset \tau_{\alpha}$ for each $\alpha\in A$ which implies that $\tau=\inf T$.
\end{proof}

By $\mathcal{SCT}$ we denote the poset $\mathcal{SCT}(\mathcal{C}^0)$.

Let $\tau_{c}$ be a topology on $\mathcal{C}^0$ such that each non-zero point is isolated in $(\mathcal{C}^0,\tau_c)$ and an open neighborhood base at $0$ of the topology $\tau_c$ consists of cofinite subsets of $\mathcal{C}^0$ which contain $0$.
We remark that $\tau_c$ is the unique compact shift-continuous topology on $\C^0$ which implies that $\tau_c=\inf\mathcal{SCT}$
(see~\cite[Theorem 1]{Gutik-2015} and~\cite[Lemma 3]{Bardyla-2018(1)} for a more general case).
Hence Lemma~\ref{l0} implies the following:

\begin{corollary}
The poset $\mathcal{SCT}$ is a complete lattice.
\end{corollary}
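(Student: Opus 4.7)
The plan is to verify directly that the hypothesis of the second clause of Lemma~\ref{l0} is met for $X=\mathcal{C}^0$; once this is done, the conclusion is immediate. Thus the whole task reduces to exhibiting a least element of the poset $\mathcal{SCT}=\mathcal{SCT}(\mathcal{C}^0)$, and the natural candidate is the compact cofinite topology $\tau_c$ defined just above the corollary.

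First I would observe that $\tau_c$ belongs to $\mathcal{SCT}$: the non-zero points are isolated and joint (hence separate) continuity of multiplication at a product $x\cdot y$ with $xy\neq 0$ is trivial because $\{xy\}$ is open, while continuity at pairs whose product is $0$ follows because for any cofinite neighbourhood $U\ni 0$ and any fixed $x\in\mathcal{C}^0$ the preimage of $U$ under left (or right) translation by $x$ is a cofinite set containing $0$, hence open in $\tau_c$. Hausdorffness is clear.

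Next I would argue that $\tau_c$ sits below every Hausdorff shift-continuous topology on $\mathcal{C}^0$. This is exactly the content of \cite[Theorem~1]{Gutik-2015} (generalised in \cite[Lemma~3]{Bardyla-2018(1)}), which asserts that $\tau_c$ is the unique compact shift-continuous topology on $\mathcal{C}^0$ and minorises all elements of $\mathcal{SCT}$; so one only needs to cite these results, as the authors already do in the paragraph preceding the corollary. Consequently $\tau_c=\inf\mathcal{SCT}$ is the least element of $\mathcal{SCT}$.

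Having checked that $\mathcal{SCT}$ contains a least element and is, by Lemma~\ref{l0}, a complete upper semilattice, the second assertion of that lemma immediately upgrades it to a complete lattice. There is no real obstacle here: the whole corollary is a one-line deduction from Lemma~\ref{l0} together with the cited dichotomy results identifying $\tau_c$ as the bottom of $\mathcal{SCT}$.
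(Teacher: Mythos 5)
Your proposal is correct and follows exactly the paper's route: identify $\tau_c$ as the least element of $\mathcal{SCT}$ via the cited dichotomy results and then invoke the second clause of Lemma~\ref{l0}. The extra verification that $\tau_c$ is itself shift-continuous is a reasonable addition but does not change the argument.
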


A topology $\tau$ on $\mathcal{C}^0$ is called {\em weak} if it is contained in the coarsest inverse semigroup topology $\tau_{\min}$ on $\mathcal{C}^0$. By $\mathcal{W}$ we denote the sublattice ${\downarrow} \tau_{\min}\subset \mathcal{SCT}$ of all weak shift-continuous topologies on $\mathcal{C}^0$.

In this paper we investigate properties of the lattice $\mathcal{W}$. More precisely, we show that $\mathcal{W}$ is isomorphic to the lattice $\mathcal{SIF}^1{\times}\mathcal{SIF}^1$ where $\mathcal{SIF}^1$ is a set of all shift-invariant filters on $\omega$ with an attached element $1$ endowed with the following partial order: $\mathcal{F}\leq \mathcal{G}$ iff $\mathcal{G}=1$ or $\mathcal{F}\subset \mathcal{G}$. Also, we investigate cardinal characteristics of the lattice $\mathcal{W}$. In particular, we proved that $\mathcal{W}$ contains an antichain of cardinality $2^{\mathfrak{c}}$ and a well-ordered chain of cardinality $\mathfrak{c}$. Moreover, there exists a well-ordered chain of first-countable weak topologies of order type $\mathfrak{t}$.

Cardinal characteristics of chains and antichains, and other properties of a poset of group topologies were investigated in~\cite{ADM, BDFW, CR, CR1, D, D1, D2}.

First we define two frequently used weak topologies $\tau_L$ and $\tau_R$ on the semigroup $\C^0$. All non-zero elements are isolated in both of the above topologies and
\begin{itemize}
  \item the family $\mathcal{B}_L(0)=\{A_n\mid n\in\omega\}$, where $A_n=\{0\}\cup\{(k,m)\mid k>n, m\in\omega\}$, is an open neighborhood base at $0$ of the topology $\tau_L$;
  \item the family $\mathcal{B}_R(0)=\{B_n\mid n\in\omega\}$, where $B_n=\{0\}\cup\{(k,m)\mid m>n, k\in\omega\}$, is an open neighborhood base at $0$ of the topology $\tau_R$.
\end{itemize}

Observe that $\tau_{\min}=\tau_L{\vee}\tau_R$.

A semigroup topology $\tau$ on $\C^0$ is called {\em minimal} if there exists no semigroup topology on $\C^0$ which is strictly contained in $\tau$.

\begin{lemma}\label{lemma2}
$\tau_L$ and $\tau_R$ are minimal semigroup topologies on $\C^0$. 
\end{lemma}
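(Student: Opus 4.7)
The plan is to verify joint continuity of multiplication in $\tau_L$ and minimality of $\tau_L$ separately, and to transfer both conclusions to $\tau_R$ via the anti-isomorphism $\phi\colon\C^0\to\C^0$, $(k,m)\mapsto(m,k)$, $\phi(0)=0$, which carries the base $\{A_n\}$ of $\tau_L$ onto the base $\{B_n\}$ of $\tau_R$; this reduces both assertions about $\tau_R$ to the corresponding assertions about $\tau_L$.

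For joint continuity of $\tau_L$, since every nonzero point is $\tau_L$-isolated, it suffices to check pairs $(x,y)$ involving $0$. Using the product formula $(a,b)(c,d)=(a+c-b,d)$ when $b\leq c$ and $(a,d+b-c)$ when $b>c$, I would observe that right multiplication by any $(c,d)$ does not decrease the first coordinate, which yields $A_n\cdot(c,d)\subset A_n$ and $A_n\cdot A_n\subset A_n$; while left multiplication by $(a,b)$ satisfies $(a,b)\cdot A_n\subset A_N$ as soon as $n>N+b-a$. Together these inclusions give joint continuity at all pairs of the form $((a,b),0)$, $(0,(c,d))$, and $(0,0)$.

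For minimality, I would assume for contradiction that $\sigma$ is a Hausdorff semigroup topology on $\C^0$ with $\sigma\subsetneq\tau_L$. Since $\{0\}$ is $\sigma$-closed, $\C=\C^0\setminus\{0\}$ is a $\sigma$-open topological subsemigroup of $(\C^0,\sigma)$, and the Eberhart--Selden theorem forces the restriction $\sigma|_{\C}$ to be discrete; consequently every nonzero point is $\sigma$-isolated and $\sigma$ is determined by its neighborhood filter $\mathcal{F}_\sigma$ at $0$. The strict inclusion $\mathcal{F}_\sigma\subsetneq\mathcal{F}_{\tau_L}$ means some $A_n\notin\mathcal{F}_\sigma$, equivalently every $\sigma$-neighborhood of $0$ meets the strip $K_n=\{(k,m):k\leq n\}$.

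The crux will be to convert this overlap into a contradiction by exploiting multiplication. Directing $\mathcal{F}_\sigma$ by reverse inclusion and choosing $x_F\in F\cap K_n$ produces a net $x_F\to 0$ in $\sigma$ confined to the finite union of columns $K_n$; pigeonholing on the first coordinate yields a subnet of the form $(k_0,m_\alpha)\to 0$ in a single column with $k_0\leq n$. The $\sigma$-continuous left multiplication by $(0,k_0)$ together with the identity $(0,k_0)(k_0,m_\alpha)=(0,m_\alpha)$ gives $(0,m_\alpha)\to 0$ in $\sigma$. Hausdorffness rules out a bounded $(m_\alpha)$ --- otherwise a constant subnet $(0,m^*)$ would converge both to the isolated point $(0,m^*)$ and to $0$ --- so after further refinement I may assume $m_\alpha\to\infty$, which forces $(m_\alpha,0)\to 0$ in $\tau_L$ and hence in $\sigma$. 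Joint continuity of multiplication at $(0,0)$ then requires $(0,m_\alpha)(m_\alpha,0)\to 0$; however the algebraic identity $(0,m)(m,0)=(0,0)$ makes this the constant net $(0,0)$, whose only possible $\sigma$-limit is the isolated identity $(0,0)\in\C$, producing $(0,0)=0$, a contradiction. The hard part is locating this collapse $(0,m)(m,0)=(0,0)$, which is precisely what converts convergence to $0$ along a single column into a violation of separation.
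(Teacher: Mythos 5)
Your proposal is correct and follows essentially the same route as the paper: the same inclusions $A_n\cdot A_n\subset A_n$ and $(a,b)\cdot A_N\subset A_n$ give (joint) continuity, and your minimality argument is the paper's pigeonhole-on-columns contradiction, with the paper producing $(i_0,j)\cdot(j,i_0)=(i_0,i_0)\in V\cdot V$ directly rather than first normalizing to column $0$ and phrasing the argument with nets. Your explicit appeals to Eberhart--Selden for the isolation of nonzero points and to the anti-isomorphism $(k,m)\mapsto(m,k)$ for the $\tau_R$ case merely spell out steps the paper leaves implicit.
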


\begin{proof}
We shall prove the minimality of $\tau_{L}$. In the case of $\tau_{R}$ the proof is similar.

It is easy to check that that  $(i,j)\cdot A_{n+j}\subseteq A_{n}$, $A_{n}\cdot (i,j)\subseteq A_{n}$ and $A_n\cdot A_n\subseteq A_n$ for all $i,j,n\in\omega$. Hence $(\mathcal{C}^0,\tau_L)$ is a topological semigroup.

Suppose to the contrary that there exists a Hausdorff semigroup topology $\tau$ on $\mathcal{C}^0$ which is strictly contained in $\tau_L$.
Then there exists $A_n\in \mathcal{B}_L(0)$ such that the set $U\setminus A_n$ is infinite for any open neighbourhood $U$ of zero in $(\mathcal{C}^0,\tau)$. The pigeonhole principle implies that there exists a non-negative integer $i_0\leq n$  such that the set $U\cap\{(i_0,j)\mid j\in \omega\}$ is infinite for each open neighborhood $U\in \tau$ of $0$. The continuity of the semigroup operation in $(\mathcal{C}^0,\tau)$ yields an open neighborhood $V\in \tau$ of $0$ such that $V\cdot V\subset \mathcal{C}^0\setminus \{(i_0,i_0)\}$. Since $V$ contains some $A_m$ and the set $V\cap\{(i_0,j)\mid j\in \omega\}$ is infinite, there exists a positive integer $j>m$ such that $(i_0,j)\in V$ and $(j,i_0)\in V$. Hence $(i_0,i_0)=(i_0,j)\cdot (j,i_0)\in V\cdot V\subset \mathcal{C}^0\setminus \{(i_0,i_0)\}$ which provides a contradiction.
\end{proof}

\begin{problem}
Do there exist other minimal semigroup topologies on $\mathcal{C}^0$?
\end{problem}

\begin{lemma}\label{lemma3}
If $\tau$ is a semigroup topology on $\mathcal{C}^0$ such that $\tau\in {\downarrow}^{\circ}\tau_{\min}$ then either $\tau=\tau_L$ or  $\tau=\tau_R$.
\end{lemma}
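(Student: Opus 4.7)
The plan is to analyze $\tau$ via two coordinate filters attached to the neighborhood filter $\mathcal{N}(0)$ at $0$. I would first observe that every non-zero point of $\mathcal{C}^0$ is $\tau$-isolated: the subspace $(\mathcal{C},\tau|_{\mathcal{C}})$ is a Hausdorff semitopological bicyclic semigroup, hence discrete by Bertman--West, and Hausdorffness of $(\mathcal{C}^0,\tau)$ then separates each non-zero point from $0$. Since $\tau\subseteq\tau_{\min}$, every $U\in\mathcal{N}(0)$ contains some $C_n$. For each $i\in\omega$, let $\mathcal{F}_i$ be the upward closure in $\mathcal{P}(\omega)$ of the sets $\{k:(i,k)\in U\}$ as $U$ ranges over $\mathcal{N}(0)$; shift-continuity of left multiplication by $(j,i)$, combined with the identity $(j,i)(i,k)=(j,k)$, yields $\mathcal{F}_i=\mathcal{F}_j$, so there is a single row-filter $\mathcal{F}_r$, and dually a column-filter $\mathcal{F}_c$. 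Hausdorffness ensures each is either the improper collection $\mathcal{P}(\omega)$ or a free filter (in particular contains no finite set).

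The crucial step uses joint continuity to prove: if $\mathcal{F}_r$ is proper, then $\mathcal{F}_r$ equals the cofinite filter on $\omega$. Fix $U\in\mathcal{N}(0)$ and choose $V\in\mathcal{N}(0)$ with $V\cdot V\subseteq U$ and $C_m\subseteq V$. The set $\{k:(0,k)\in V\}\in\mathcal{F}_r$ is infinite (no finite sets in a free filter), so some $a>m$ satisfies $(0,a)\in V$. For every $j>m$ one has $(a,j)\in C_m\subseteq V$ and $(0,a)\cdot(a,j)=(0,j)\in V\cdot V\subseteq U$, so $\{k:(0,k)\in U\}$ contains $\{m+1,m+2,\ldots\}$ and is cofinite. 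The symmetric argument handles $\mathcal{F}_c$.

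Finally, a case analysis on $(\mathcal{F}_r,\mathcal{F}_c)$, each of which is now improper or cofinite, completes the proof. If both are improper, intersecting finitely many neighborhoods of $0$ missing rows $0,\ldots,n$ and columns $0,\ldots,n$ produces a $\tau$-open neighborhood inside $C_n$, so $C_n\in\tau$ for every $n$ and $\tau=\tau_{\min}$, contradicting the hypothesis. If both are cofinite, pick $U\in\mathcal{N}(0)$ with $(0,0)\notin U$ and $V\cdot V\subseteq U$; the cofinite sets $\{k:(0,k)\in V\}$ and $\{k:(k,0)\in V\}$ share some $b$, so $(0,b)(b,0)=(0,0)\in V\cdot V\subseteq U$, a contradiction. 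If $\mathcal{F}_r$ is improper and $\mathcal{F}_c$ is cofinite, intersecting neighborhoods missing rows $0,\ldots,n$ gives $A_n\in\tau$, so $\tau\supseteq\tau_L$; conversely, any $U\in\mathcal{N}(0)$ contains some $C_m$, and cofiniteness of $\{k:(k,j)\in U\}$ for each $j\le m$ produces $N$ with $A_N\subseteq U$, so $\tau\subseteq\tau_L$ and $\tau=\tau_L$. The symmetric case delivers $\tau=\tau_R$.

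The main obstacle is the cofiniteness lemma in the second paragraph: it is the only step that essentially uses joint rather than separate continuity, and it is precisely what rules out the many non-cofinite shift-invariant filters from producing semigroup topologies beyond $\tau_L$ and $\tau_R$.
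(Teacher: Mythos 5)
Your proof is correct and follows essentially the same route as the paper: the decisive step --- taking $V$ with $V\cdot V\subseteq U$ and $C_m\subseteq V$ and using $(0,a)\cdot(a,j)=(0,j)$ to force a cofinite tail of a row (or column) into $U$ --- is exactly the paper's key joint-continuity computation. The only organizational difference is that you run a $2\times 2$ case analysis on properness/cofiniteness of the row- and column-filters and prove both inclusions $\tau\subseteq\tau_L$ and $\tau_L\subseteq\tau$ directly, whereas the paper reaches $\tau\subseteq\tau_R$ (resp.\ $\tau_L$) via a pigeonhole dichotomy and then invokes the minimality of these topologies (Lemma~\ref{lemma2}); your ``both cofinite'' contradiction is in effect that minimality proof inlined.
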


\begin{proof}
Let $\tau$ be a semigroup topology on $\mathcal{C}^0$ such that $\tau \in {\downarrow}^{\circ}\tau_{\min}$. Then there exists $C_n\in \mathcal{B}_{\min}$ such that the set $U\setminus C_n$ is infinite for each open neighborhood $U\in\tau$ of $0$. The pigeonhole principle implies that there exists a non-negative integer $i_0\leq n$ such that at least one of the following two cases holds:
\begin{itemize}
\item[(1)] the set $U\cap\{(i_0,j)\mid j\in \omega\}$ is infinite for each open neighborhood $U$ of $0$;
\item[(2)] the set $U\cap\{(j,i_0)\mid j\in \omega\}$ is infinite for each open neighborhood $U$ of $0$.
\end{itemize}

Consider case (1). Fix an arbitrary $n\in\omega$ and an open neighborhood $U\in\tau$ of $0$. Since $(n,i_0)\cdot 0=0$ the continuity of the semigroup operation in $(\mathcal{C}^0,\tau)$ yields an open neighborhood $V$ of $0$ such that $(n,i_0)\cdot V\subset U$. Since the set $V\cap \{(i_0,j)\mid j\in \omega\}$ is infinite and $(n,i_0)\cdot (i_0,j)=(n,j)$ for each $n,j\in\w$, the set $U\cap\{(n,j)\mid j\in \omega\}$ is infinite as well. Hence $0$ is an accumulation point of the set $\{(n,j)\mid j\in \omega\}$ for each $n\in\omega$. Using one more time the continuity of the semigroup operation in $(\mathcal{C}^0,\tau)$ we can find an open neighborhood $W\in\tau$ of $0$ such that $W\cdot W\subset U$. Since $\tau \subset \tau_{\min}$ we obtain that there exists $m\in\omega$ such that $C_m\subset W$. Recall that $0$ is an accumulation point of the set $\{(n,j)\mid j\in\omega\}$ for each $n\in \omega$. Hence for each $n\in\omega$ we can find a positive integer $j_n>m$ such that $(n,j_n)\in W$. Observe that for each $n\in \omega$ the set $\{(j_n,k)\mid k>m\}\subset W$. Then for each $n\in\omega$ the following inclusion holds:
$$\{(n,k)\mid k>m\}=(n,j_n)\cdot \{(j_n,k)\mid k>m\}\subset W\cdot W\subset U.$$
Hence for each open neighborhood $U\in\tau$ of $0$ there exists $m\in\omega$ such that $B_m\subset U$ which implies that
$U\in\tau_R$ and $\tau\subset \tau_R$. By lemma~\ref{lemma2}, $\tau=\tau_R$.

Similar arguments imply that $\tau=\tau_L$ provided that case (2) holds.
\end{proof}

Now we are going to describe the sublattices ${\downarrow}\tau_L$ and ${\downarrow}\tau_R$ of $\mathcal{W}$.
Let $\tau$ be an arbitrary shift-continuous topology on $\mathcal{C}^0$ such that $\tau\in {\downarrow}^{\circ}\tau_L$.
For each open neighborhood $U\in\tau$ of $0$ and $i\in\omega$ put $F^U_i=\{n\in\omega\mid (i,n)\in U\}$.


\begin{lemma}\label{lemma3a}
Let $\tau\in {\downarrow}^\circ \tau_L$. Then for each $i\in \omega$ the set $\mathcal{F}_i=\{F^U_i\mid 0\in U\in \tau\}$ is a filter on $\omega$. Moreover, $\mathcal{F}_i=\mathcal{F}_j$ for each $i,j\in \omega$.
\end{lemma}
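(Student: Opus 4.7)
The plan is to verify the filter axioms and the ``moreover'' equality by separate continuity arguments, starting with the equality to reduce subsequent checks to a single row. For arbitrary $i,j\in\omega$ and any $\tau$-open $U\ni 0$, the key identity $(i,j)\cdot(j,k)=(i,k)$ (the middle indices match, so the product falls into the $b\leq c$ branch with equality) together with continuity of the left shift $L_{(i,j)}$ at $0$ (since $(i,j)\cdot 0=0\in U$) produces the $\tau$-open neighborhood $V:=L_{(i,j)}^{-1}(U)\ni 0$ satisfying $(j,k)\in V\iff (i,k)\in U$, whence $F^V_j=F^U_i$. This gives $\mathcal{F}_i\subset\mathcal{F}_j$, and by symmetry $\mathcal{F}_i=\mathcal{F}_j$. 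Closure of $\mathcal{F}_i$ under finite intersection is then immediate from $F^U_i\cap F^V_i=F^{U\cap V}_i$, since $U\cap V\in\tau$ contains $0$.

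Next I would handle $\emptyset\notin\mathcal{F}_i$ by contradiction, this being the point where the strict inclusion $\tau\in{\downarrow}^\circ\tau_L$ is essentially used (for $\tau=\tau_L$ itself, $A_n$ witnesses $F^{A_n}_i=\emptyset$ for $i\leq n$). If $F^U_i=\emptyset$ for some $\tau$-open $U\ni 0$, the invariance yields for each $j\in\omega$ a $\tau$-open $U_j\ni 0$ with $F^{U_j}_j=\emptyset$. Setting $W_N:=U_0\cap\cdots\cap U_N$, one obtains a $\tau$-open neighborhood of $0$ avoiding rows $0,\ldots,N$. Because $\tau\subset\tau_L$, $W_N$ contains some $A_{m_N}$, and the row-avoidance forces $m_N\geq N$, giving $A_{m_N}\subset W_N\subset A_N$. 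Thus $\{W_N\}$ is a $\tau$-neighborhood base of $0$ mutually cofinal with the $\tau_L$-base $\{A_N\}$, so the neighborhood filters of $0$ in $\tau$ and $\tau_L$ coincide; combined with a sub-claim that non-zero elements of $\mathcal{C}^0$ are $\tau$-isolated (derivable from Hausdorffness and separate continuity via the idempotent identities $L_{(a,b)}^{-1}(\{(a,b)\})=\{(c,c):c\leq b\}$ and a Hausdorff separation inside this finite set), this forces $\tau=\tau_L$, contradicting $\tau\in{\downarrow}^\circ\tau_L$.

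Upward closure is then straightforward: given $F^U_i\subset A\subset\omega$, the set $V:=U\cup\{(i,n):n\in A\setminus F^U_i\}$ is a union of $\tau$-opens (each non-zero singleton being $\tau$-open by the sub-claim), is a $\tau$-neighborhood of $0$, and satisfies $F^V_i=A$, hence $A\in\mathcal{F}_i$. The main obstacle is the sub-claim of non-zero isolation, which feeds both the contradiction in the non-emptiness step and the construction in the upward-closure step; it is precisely here that the strict inequality $\tau\subsetneq\tau_L$ must be leveraged to prevent pathological accumulation of non-zero points.
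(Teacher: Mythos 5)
Your proposal is correct in structure and uses essentially the same ingredients as the paper's proof, with two reorganizations that are both legitimate and arguably cleaner. First, where the paper proves $\mathcal{F}_i=\mathcal{F}_j$ by mutual cofinality (for each $F^U_i$ there is $F^V_j\subset F^U_i$, via translations by $(0,j-i)$ and $(j-i,0)$), your choice $V=L_{(i,j)}^{-1}(U)$ gives the exact set equality $F^V_j=F^U_i$ in one step. Second, for $\emptyset\notin\mathcal{F}_i$ the paper argues directly that $0$ is an accumulation point of every row $\{(k,n)\mid n\in\omega\}$ (by the translation argument of Lemma~\ref{lemma3}), so every $F^U_i$ is in fact infinite; your contradiction argument, squeezing $A_{m_N}\subset W_N\subset A_N$ and concluding that the neighborhood filters of $0$ in $\tau$ and $\tau_L$ coincide, reaches the same conclusion and correctly locates where strictness of $\tau\subsetneq\tau_L$ is used. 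The upward-closure step is identical to the paper's.

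The one point that does not work as written is your parenthetical derivation of the sub-claim that non-zero points are $\tau$-isolated. The identity $L_{(a,b)}^{-1}(\{(a,b)\})=\{(c,c)\mid c\leq b\}$ is correct, but separate continuity only tells you that this finite set is \emph{closed} (preimage of a closed singleton under a continuous translation); no amount of Hausdorff separation inside a finite closed set produces an open singleton in the ambient space, so this route does not isolate $(a,b)$. The standard justification, which the paper invokes tacitly when it writes ``since each non-zero point is isolated in $(\mathcal{C}^0,\tau)$,'' is the Bertman--West theorem cited in the introduction: every Hausdorff shift-continuous topology on the bicyclic monoid $\mathcal{C}$ is discrete, hence $\mathcal{C}$ is a discrete subspace of $(\mathcal{C}^0,\tau)$, and intersecting a neighborhood witnessing this with a neighborhood separating $(a,b)$ from $0$ isolates $(a,b)$. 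With the sub-claim justified this way, both your non-emptiness step and your upward-closure step go through, and the proof is complete.
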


\begin{proof}
Since $\tau\in {\downarrow}^{\circ}\tau_L$ there exists $n\in\omega$ such that the set $U\setminus A_n$ is infinite for each open neighborhood $U\in\tau$ of $0$.
Similar arguments as in the proof of Lemma~\ref{lemma3} imply that
$0$ is an accumulation point of the set $\{(k,n)\mid n\in\omega\}$
for each $k\in \omega$.
Fix any $i\in\omega$. Since the intersection of two neighborhoods of $0$ is a neighborhood of $0$, the family $\mathcal{F}_i$ is closed under finite intersections. Fix an arbitrary subset $A\subset \omega$ such that $F^U_i\subset A$ for some $U\in \tau$. Since each non-zero point is isolated in $(\mathcal{C}^0,\tau)$ the set $V=U\cup \{(i,n)\mid n\in A\}$ is an open neighborhood of $0$ such that $F^V_i=A$. Hence the family $\mathcal{F}_i$ is a filter for each $i\in\omega$.

Fix an arbitrary $i,j\in\omega$. Without loss of generality we can assume that $i<j$.
To prove that $\mathcal{F}_i=\mathcal{F}_j$ it is sufficient to prove the following statements:
\begin{itemize}
\item[(1)] for each $F_i^U\in \mathcal{F}_{i}$ there exists $F_j^V\in \mathcal{F}_{j}$ such that $F_j^V\subset F_i^U$;
\item[(2)] for each $F_j^U\in \mathcal{F}_{j}$ there exists $F_i^V\in \mathcal{F}_{i}$ such that $F_i^V\subset F_j^U$.
\end{itemize}

Consider statement (1). Fix an arbitrary element $F_i^U\in \mathcal{F}_{i}$. The separate continuity of the semigroup operation in $(\mathcal{C}^0,\tau)$ yields an open neighborhood $V$ of $0$ such that $(0,j-i)\cdot V\subset U$.
Observe that $(0,j-i)\cdot (j,n)=(i,n)$ for each $n\in\omega$ which implies that $F_j^V\subset F_i^U$.

Consider statement (2). Fix an arbitrary element $F_j^U\in \mathcal{F}_{j}$.
The separate continuity of the semigroup operation in $(\mathcal{C}^0,\tau)$ yields an open neighborhood $V$ of $0$ such that $(j-i,0)\cdot V\subset U$. Observe that $(j-i,0)\cdot (i,n)=(j,n)$ for each $n\in\omega$ which implies that $F_i^V\subset F_j^U$.

Hence $\mathcal{F}_i=\mathcal{F}_j$, for each $i,j\in\omega$.
\end{proof}

By Lemma~\ref{lemma3a},
each shift-continuous topology $\tau\in{\downarrow}^{\circ}\tau_L$
generates a unique filter on $\omega$ which we denote by $\mathcal{F}_{\tau}$.

For each $n\in\mathbb{Z}$ and $A\subset \omega$ the set $\{n+x\mid x\in A\}\cap\omega$ is denoted by $n+A$.
A filter $\mathcal{F}$ on the set $\omega$ is called {\em shift-invariant} if it satisfies the following conditions:
\begin{itemize}
\item each cofinite subset of $\omega$ belongs to $\mathcal{F}$;
\item for each $F\in\mathcal{F}$ and $n\in \mathbb{Z}$ there exists $H\in \mathcal{F}$ such that $n+H\subset F$.
\end{itemize}

For each filter $\mathcal{F}$ on $\omega$ and $n\in\mathbb{Z}$ the filter which is generated by the family $\{n+F\mid F\in\mathcal{F}\}$ is denoted by $n+\mathcal{F}$.

\begin{lemma}
A free filter $\mathcal{F}$ on the set $\omega$ is shift-invariant iff $\mathcal{F}=n+\mathcal{F}$ for each $n\in \mathbb{Z}$.
\end{lemma}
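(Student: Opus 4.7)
The plan is to unpack the definition of the filter $n+\mathcal{F}$, which is generated by the family $\{n+F : F \in \mathcal{F}\}$. Because the easy identity $n + (F_1 \cap F_2) = (n+F_1) \cap (n+F_2)$ holds (a routine check against $n+A = \{n+x : x \in A\} \cap \omega$), a set $G$ lies in $n+\mathcal{F}$ if and only if $G \supset n+H$ for some $H \in \mathcal{F}$. I will use this reformulation throughout.

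For the ($\Rightarrow$) direction, assume $\mathcal{F}$ is shift-invariant and fix $n \in \mathbb{Z}$. The inclusion $\mathcal{F} \subseteq n+\mathcal{F}$ is a direct translation of condition~(2): given $F \in \mathcal{F}$, shift-invariance supplies $H \in \mathcal{F}$ with $n+H \subset F$, i.e.\ $F \in n+\mathcal{F}$. For $n+\mathcal{F} \subseteq \mathcal{F}$, it suffices to verify that each basic set $n+F$ lies in $\mathcal{F}$. Apply condition~(2) to $F$ with shift $-n$ to obtain $H \in \mathcal{F}$ with $-n + H \subset F$. A short computation shows that $n + (-n + H)$ equals $H$ when $n \le 0$ and equals $H \setminus \{0,\ldots,n-1\}$ when $n > 0$, so in either case it differs from $H$ by at most finitely many elements. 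Since condition~(1) places every cofinite set in $\mathcal{F}$, this truncation remains in $\mathcal{F}$; hence $n+F \supseteq n + (-n + H) \in \mathcal{F}$, so $n+F \in \mathcal{F}$.

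For the ($\Leftarrow$) direction, assume $\mathcal{F}$ is a free filter with $\mathcal{F} = n+\mathcal{F}$ for every $n \in \mathbb{Z}$. Condition~(2) is immediate from the reformulation: any $F \in \mathcal{F} = n+\mathcal{F}$ contains some $n+H$ with $H \in \mathcal{F}$. Condition~(1) is where freeness enters. For each $k \in \omega$, freeness provides, for every $j < k$, some $F_j \in \mathcal{F}$ with $j \notin F_j$; the intersection $F_0 \cap \cdots \cap F_{k-1}$ belongs to $\mathcal{F}$ and is contained in $[k,\infty)$, so $[k,\infty) \in \mathcal{F}$. Every cofinite subset of $\omega$ contains some $[k,\infty)$ and therefore belongs to $\mathcal{F}$.

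The only delicate point is the computation of $n + (-n + H)$ in the forward direction, where the truncation at $0$ built into the definition of $n+A$ forces a case split on the sign of $n$; once that is pinned down, both implications are routine. It is also worth noting that the hypothesis of freeness is used only in the reverse direction, exclusively to recover condition~(1).
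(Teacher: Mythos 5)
Your proof is correct and follows essentially the same route as the paper's: both directions hinge on applying the shift-invariance condition with shift $-n$ and then shifting back by $n$, and on the observation that a free filter contains all cofinite sets. The only (cosmetic) difference is in showing $n+\mathcal{F}\subseteq\mathcal{F}$: the paper shrinks $H$ in advance to a set contained in $(n,\infty)$ so that $n+(-n+H)=H$ exactly, whereas you shift first and then absorb the finite truncation $H\setminus\{0,\dots,n-1\}$ using condition~(1) --- both are valid and equally short.
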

\begin{proof}
Let $\mathcal{F}$ be a shift-invariant filter on $\omega$ and $n\in\mathbb{Z}$. Fix any $F\in\mathcal{F}$. There exists an element $H\in\mathcal{F}$ such that $n+H\subset F$. Hence $\mathcal{F}\subset n+\mathcal{F}$. Fix any set $n+F\in n+\mathcal{F}$. Since $\mathcal{F}$ is shift-invariant there exists $H\in\mathcal{F}$ such that $-n+H\subset F$ and $k>n$ for each $k\in H$. Then $H\subset n+F$ witnessing that $n+\mathcal{F}\subset \mathcal{F}$. Hence $\mathcal{F}=n+\mathcal{F}$ for each $n\in \mathbb{Z}$.

Let $\mathcal{F}$ be a free filter on $\omega$ such that $\mathcal{F}=n+\mathcal{F}$ for each $n\in \mathbb{Z}$. Since $\mathcal{F}$ is free every cofinite subset of $\omega$ belongs to $\mathcal{F}$. Fix any $n\in\mathbb{Z}$ and $F\in \mathcal{F}$. Since $\mathcal{F}=n+\mathcal{F}$ there exists $H\in \mathcal{F}$ such that $n+H\subset F$.
\end{proof}

By $\mathcal{SIF}$ we denote the set of all shift-invariant filters on $\omega$ endowed with a following partial order:
$\mathcal{F}_1\leq \mathcal{F}_2$ iff $\mathcal{F}_1\subseteq \mathcal{F}_2$, for each $\mathcal{F}_1,\mathcal{F}_2\in \mathcal{SIF}$.

\begin{lemma}\label{lemma4}
Each $\tau\in{\downarrow}^{\circ}\tau_L$ generates a shift-invariant filter $\mathcal{F}_{\tau}$ on $\omega$. Moreover, $\mathcal{F}_{\tau_1}\neq \mathcal{F}_{\tau_2}$ for any distinct shift-continuous topologies $\tau_1$ and $\tau_2$ on $\mathcal{C}^0$ which belongs to ${\downarrow}^{\circ}\tau_L$.
\end{lemma}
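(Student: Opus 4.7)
The plan is to verify the two conditions of shift-invariance separately, then establish injectivity.

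For shift-invariance, I will first show every cofinite $A\subset\omega$ lies in $\mathcal{F}_\tau$: by Hausdorffness of $\tau$, for each $m\in\omega\setminus A$ pick a $\tau$-open neighborhood $U_m$ of $0$ avoiding $(0,m)$; the finite intersection $U=\bigcap_{m\in\omega\setminus A}U_m$ is an open neighborhood of $0$ with $F^U_0\subset A$, so $A\in\mathcal{F}_\tau$. Next, for closure under $\mathbb{Z}$-shifts, I will fix $F=F^U_i\in\mathcal{F}_\tau$ and $n\in\mathbb{Z}$ and exploit the product identities $(i,k)\cdot(0,n)=(i,n+k)$, which holds for $n\geq 0$ and all $k\in\omega$, and $(i,k)\cdot(-n,0)=(i,n+k)$, which holds for $n<0$ and $k>-n$. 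Shift-continuity at $(0,n)$ (respectively $(-n,0)$) produces a $\tau$-open neighborhood $V$ of $0$ with $V\cdot(0,n)\subset U$ (respectively $V\cdot(-n,0)\subset U$), and I will take $H=F^V_i$ for $n\geq 0$ and $H=F^V_i\cap\{k\in\omega\mid k>-n\}$ for $n<0$. In both cases $n+H\subset F$, and in the second case $H\in\mathcal{F}_\tau$ thanks to the cofinite step just established.

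For injectivity, I will assume $\mathcal{F}_{\tau_1}=\mathcal{F}_{\tau_2}$ and show $\tau_1\subseteq\tau_2$ (equality then follows by symmetry). Since non-zero points are isolated in both topologies, it suffices to show each $\tau_1$-open neighborhood $U$ of $0$ contains a $\tau_2$-open neighborhood of $0$. The containment $\tau_1\subset\tau_L$ forces $U\supset A_n$ for some $n\in\omega$, so $F^U_j=\omega$ for all $j>n$, localizing the matching to the finitely many rows $i\leq n$. For each such $i$ the hypothesis $F^U_i\in\mathcal{F}_{\tau_1}=\mathcal{F}_{\tau_2}$ yields a $\tau_2$-open neighborhood $V_i$ of $0$ with $F^{V_i}_i\subset F^U_i$; the finite intersection $W=\bigcap_{i\leq n}V_i\in\tau_2$ then satisfies $F^W_j\subset F^U_j$ for every $j$, hence $W\subset U$, as required.

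The main obstacle is the negative-shift case of the first part: the asymmetry of the bicyclic product makes the shift identity hold only on the tail $\{k>-n\}$, so the naive choice $H=F^V_i$ fails and one must truncate by a cofinite set; this is precisely why the cofinite-sets-in-$\mathcal{F}_\tau$ step must be handled before the shift-closure step. A secondary subtlety is the reliance on non-zero points being isolated in $\tau$, which is used both in realizing arbitrary supersets of $F^U_i$ as genuine rows of open neighborhoods (already exploited in Lemma~\ref{lemma3a}) and in passing from ``neighborhood of $0$'' to ``open set'' in the injectivity argument.
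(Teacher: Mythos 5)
Your proof is correct and follows essentially the same route as the paper's: cofiniteness of the filter sets from Hausdorffness, shift-closure from separate continuity of translation by $(0,n)$ (resp.\ $(|n|,0)$) combined with the explicit product formulas, discarding an initial segment of $H$ in the negative case, and injectivity from the fact that the filter together with the tails $A_n$ determines the neighborhood base at $0$. The only cosmetic differences are that the paper uses a left translation for positive shifts where you use a right one, and that you spell out the injectivity step which the paper leaves as an observation.
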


\begin{proof}
Observe that each open neighborhood $U\in \tau$ of $0$ is of the form $U=\cup_{i=0}^n\{(i,n)\mid n\in F^U_i\}\cup A_n$ where $F^U_i\in \mathcal{F}_{\tau}$ for each $i\leq n$ and $n\in\omega$.
By Lemma~\ref{lemma3a},
the set $F=\cap_{i=0}^n F^U_i$
 belongs to $\mathcal{F}_{\tau}$. Then the set
$U_{F,n}=\{(i,k)\mid i\leq n\hbox{ and }k\in F\}\cup A_n$
 is an open neighborhood of $0$ which is contained in $U$.
Hence the family $\mathcal{B}(0)=\{U_{F,n}\mid F\in\mathcal{F}_{\tau}\hbox{ and }n\in\omega\}$
forms an open neighborhood base at $0$ of the topology $\tau$.

The Hausdorfness of $(\mathcal{C}^0,\tau)$ implies that each cofinite subset of $\omega$ belongs to $\mathcal{F}$.

Fix an arbitrary $n\in \mathbb{Z}$ and a basic open neighborhood $U_{F,0}$ of $0$.

If $n>0$, then the separate continuity of the semigroup operation in $(\C^0,\tau)$ yields a basic open neighborhood $U_{H,m}$ of $0$ such that $(0,n)\cdot U_{H,m}\subset U_{F,0}$. Observe that $\{(0,k)\mid k\in H\}\subset U_{H,m}$ and
$$(0,n)\cdot \{(0,k)\mid k\in H\}=\{(0,k+n)\mid k\in H\}=\{(0,k)\mid k\in H+n\}\subset U_{F,0}.$$
Hence $H+n\subset F$.

Assume that $n<0$. Since $(\C^0,\tau)$ is a Hausdorff semitopological semigroup there exists a basic open neighborhood $U_{H,m}$ of $0$ such that $H\cap\{0,\ldots,n\}=\emptyset$ and $U_{H,m}\cdot(|n|,0)\subset U_{F,0}$. Observe that $\{(0,k)\mid k\in H\}\subset U_{H,m}$ and
$$\{(0,k)\mid k\in H\}\cdot(|n|,0) =\{(0,k-|n|)\mid k\in H\}=\{(0,k)\mid k\in H+n\}\subset U_{F,0}.$$
Hence $H+n\subset F$.

Observe that the second part of the lemma follows from the description of topologies $\tau_1,\tau_2\in {\downarrow}^\circ\tau_L$ and from the definition of filters $\mathcal{F}_{\tau_1}, \mathcal{F}_{\tau_2}$.
\end{proof}

Let $\mathcal{F}$  be a shift-invariant filter on $\omega$. By $\tau_{\mathcal{F}}^L$
we denote a topology on $\mathcal{C}^0$ which is defined as follows:
each non-zero element is isolated in $(\mathcal{C}^0,\tau_{\mathcal{F}}^L)$ and the family
$\mathcal{B}=\{U_{F,n}\mid F\in \mathcal{F}\hbox{ and } n\in\omega\}$ where $U_{F,n}=\{(i,k)\mid i\leq n\hbox{ and }k\in F\}\cup A_n$
forms an open neighborhood base of $0$ in $(\mathcal{C}^0,\tau_{\mathcal{F}}^L)$.

\begin{lemma}\label{lemma5}
For each shift-invariant filter $\mathcal{F}$ on $\omega$ the topology $\tau_{\mathcal{F}}^L$ is shift-continuous and belongs to ${\downarrow}^\circ\tau_L$. Moreover, if $\mathcal{F}_{1}$ and $\mathcal{F}_{2}$ are distinct shift-invariant filters on $\omega$ then $\tau_{\mathcal{F}_1}^L\neq\tau_{\mathcal{F}_2}^L$.
\end{lemma}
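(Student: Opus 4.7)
The plan is to check the three claims about $\tau^L_{\mathcal F}$ in turn and then establish injectivity of the assignment $\mathcal F\mapsto\tau^L_{\mathcal F}$. Hausdorffness is essentially immediate: since every cofinite subset of $\omega$ belongs to $\mathcal F$, for a non-zero point $(i,k)\in\mathcal C$ one takes $F=\omega\setminus\{k\}\in\mathcal F$, and then $U_{F,i}$ is a basic neighborhood of $0$ avoiding the isolated point $(i,k)$.

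For shift-continuity, observe that a product of two non-zero elements of $\mathcal C^0$ again lies in $\mathcal C$ and hence is isolated, so separate continuity reduces to continuity at $0$ of the two shifts $x\mapsto x\cdot(a,b)$ and $x\mapsto (a,b)\cdot x$ for each $(a,b)\in\mathcal C$. Given a target $U_{F,n}$, I plan to choose $V=U_{H,m}$ with $m$ large enough that the image of $A_m$ lies in $A_n$, and with $H\in\mathcal F$ obtained from shift-invariance so that the image of the ``extra'' slice $\{(i,k):i\leq m,\ k\in H\}$ lies in $U_{F,n}$. Concretely, for the right shift I take $H\subseteq\{j:j>a\}$ with $(b-a)+H\subseteq F$, which guarantees $(i,j)\cdot(a,b)=(i,b+j-a)\in U_{F,n}$; for the left shift the formula $(a,b)\cdot(i,j)$ has two branches, and I require $H\subseteq F$ together with, for each of the finitely many $i<b$, an element of $\mathcal F$ whose translate by $b-i$ lies in $F$. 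Intersecting finitely many members of $\mathcal F$ produces the desired $H$.

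For the assertion $\tau^L_{\mathcal F}\in{\downarrow}^\circ\tau_L$, each $U_{F,n}$ equals $A_n$ together with a set of isolated non-zero points, so $U_{F,n}\in\tau_L$ and therefore $\tau^L_{\mathcal F}\subseteq\tau_L$. Strictness follows by observing that $A_0\in\tau_L$ is not open in $\tau^L_{\mathcal F}$: every basic $U_{F',n'}$ contains a point $(0,k)$ with $k\in F'$ (since $F'$ is non-empty, as $\mathcal F$ is a filter), and such a point lies outside $A_0$.

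For injectivity, suppose $F\in\mathcal F_1\setminus\mathcal F_2$; then $U_{F,0}$ is a basic $\tau^L_{\mathcal F_1}$-neighborhood of $0$. If it were $\tau^L_{\mathcal F_2}$-open, it would contain some $U_{G,m}$ with $G\in\mathcal F_2$, and looking at the slice $\{(0,k):k\in G\}\subseteq U_{G,m}\subseteq U_{F,0}$ forces $G\subseteq F$; by upward closure of $\mathcal F_2$ this yields $F\in\mathcal F_2$, a contradiction. The main technical obstacle will be the bookkeeping in the shift-continuity verification, where the two-branch multiplication formula has to be tracked together with the constraint $i\leq m$ and several translates of $F$ must be assembled into a single $H\in\mathcal F$ via finite intersection.
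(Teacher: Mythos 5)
Your proposal is correct and follows essentially the same route as the paper: Hausdorffness from the cofinite sets in $\mathcal{F}$, shift-continuity at $0$ via shift-invariance of the filter, and the remaining claims read off from the form of the basic neighborhoods $U_{F,n}$. The only difference is presentational — the paper reduces the continuity check to the two generators $(0,1)$ and $(1,0)$ of $\mathcal{C}$, while you verify translation by an arbitrary $(a,b)$ directly (and in more detail than the paper, which dismisses the last two claims as immediate from the definitions).
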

\begin{proof}
Observe that the second part of the statement of the lemma follows from the definition of topologies $\tau_{\mathcal{F}_{1}}^L$ and $\tau_{\mathcal{F}_{2}}^L$.

The definition of the topology $\tau_{\mathcal{F}}^L$ implies that $\tau_{\mathcal{F}}^L\in{\downarrow}^\circ\tau_L$.

Let $\mathcal{F}$ be a shift-invariant filter on $\omega$. Since the filter $\mathcal{F}$ contains all cofinite subsets of $\omega$ the definition of the topology $\tau_{\mathcal{F}}^L$ implies that the space $(\mathcal{C}^0,\tau_{\mathcal{F}}^L)$ is Hausdorff. Recall that the bicyclic monoid $\mathcal{C}$ is generated by two elements $(0,1)$ and $(1,0)$ and $\mathcal{C}$ is the discrete subset of $(\mathcal{C}^0,\tau_{\mathcal{F}}^L)$. Hence to prove the separate continuity of the semigroup operation in $(\mathcal{C}^{0},\tau_{\mathcal{F}}^L)$ it is sufficient to check it in the following four cases:
\begin{itemize}
\item[(1)] $(0,1)\cdot 0=0$;
\item[(2)] $(1,0)\cdot 0=0$;
\item[(3)] $0\cdot (0,1)=0$;
\item[(4)] $0\cdot (1,0)=0$.
\end{itemize}

Fix an arbitrary basic open neighborhood $U_{F,n}$ of $0$.

(1) Since the filter $\mathcal{F}$ is shift-invariant there exists $H\in\mathcal{F}$ such that $H\subset F$ and $H+1\subset F$. It is easy to check that $(0,1)\cdot U_{H,n+1}\subset U_{F,n}$.

(2) It is easy to check that $(1,0)\cdot U_{F,n}\subset U_{F,n}$.

(3) Since the filter $\mathcal{F}$ is shift-invariant there exists $H\in\mathcal{F}$ such that $H\subset F$ and $H+1\subset F$. It is easy to check that $U_{H,n}\cdot (0,1)\subset U_{F,n}$.

(4) Since the filter $\mathcal{F}$ is shift-invariant there exists $H\in\mathcal{F}$ such that $H\subset F$ and $H-1\subset F$. It is easy to check that $U_{H,n}\cdot (1,0)\subset U_{F,n}$.

Hence the topology $\tau_{\mathcal{F}}^L$ is shift-continuous.
\end{proof}

Put $\mathcal{SIF}^1=\mathcal{SIF}\sqcup\{1\}$ and extend the partial order $\leq$ on $\mathcal{SIF}^1$ as follows: $\mathcal{F}\leq 1$ for each $\mathcal{F}\in \mathcal{SIF}$.


\begin{theorem}\label{th1}
The posets ${\downarrow}\tau_L$ and $\mathcal{SIF}^1$ are order isomorphic.
\end{theorem}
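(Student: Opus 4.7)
The plan is to construct an explicit order-preserving bijection $\Phi: {\downarrow}\tau_L \to \mathcal{SIF}^1$ together with its inverse $\Psi$. On the ``bulk'' ${\downarrow}^\circ \tau_L$ I would set $\Phi(\tau) = \mathcal{F}_\tau$, the shift-invariant filter provided by Lemmas~\ref{lemma3a} and~\ref{lemma4}, and $\Psi(\mathcal{F}) = \tau_{\mathcal{F}}^L$ from the construction preceding Lemma~\ref{lemma5}. On the top element I would set $\Phi(\tau_L) = 1$ and $\Psi(1) = \tau_L$.

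First I would check that $\Phi$ and $\Psi$ are mutually inverse. For $\Psi \circ \Phi = \operatorname{id}$ on ${\downarrow}^\circ \tau_L$, the proof of Lemma~\ref{lemma4} already shows that the family $\{U_{F,n} \mid F \in \mathcal{F}_\tau,\ n \in \omega\}$ is a neighborhood base at $0$ in $\tau$; since non-zero points are isolated in both $\tau$ and $\tau_{\mathcal{F}_\tau}^L$ and these two topologies share this base at $0$, they coincide. For $\Phi \circ \Psi = \operatorname{id}$ on $\mathcal{SIF}$, the filter $\mathcal{F}_{\tau_{\mathcal{F}}^L}$ is by definition $\{F^U_0 \mid 0 \in U \in \tau_{\mathcal{F}}^L\}$, and inspection of the base sets $U_{F,n}$ in row $0$ shows that this filter equals $\mathcal{F}$. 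Combined with the injectivity clauses of Lemmas~\ref{lemma4} and~\ref{lemma5}, this confirms the bijection.

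Next I would verify order preservation on ${\downarrow}^\circ \tau_L$. If $\tau_1 \subseteq \tau_2$, then every neighborhood of $0$ in $\tau_1$ is one in $\tau_2$, so directly from the definition $\mathcal{F}_\tau = \{F^U_i \mid 0 \in U \in \tau\}$ in Lemma~\ref{lemma3a} one reads off $\mathcal{F}_{\tau_1} \subseteq \mathcal{F}_{\tau_2}$. Conversely, if $\mathcal{F}_{\tau_1} \subseteq \mathcal{F}_{\tau_2}$, then every basic neighborhood $U_{F,n}$ of $0$ in $\tau_1$ has $F \in \mathcal{F}_{\tau_2}$, so $U_{F,n}$ is also open in $\tau_2$; since non-zero points are isolated in both, $\tau_1 \subseteq \tau_2$. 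The order statements involving the top element are automatic: $\tau \leq \tau_L$ holds for every $\tau \in {\downarrow}\tau_L$ by the definition of the sublattice, matching $\mathcal{F} \leq 1$ for every $\mathcal{F} \in \mathcal{SIF}^1$ by the definition of the extended order.

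I do not anticipate a serious obstacle here: the substantive analysis was already carried out in Lemmas~\ref{lemma3a}--\ref{lemma5}, and the theorem is essentially a bookkeeping consolidation. The one point requiring attention is the internal matching of the two filter constructions---the filter $\mathcal{F}_\tau$ of row-traces of neighborhoods of $0$ versus the filter $\mathcal{F}$ used to build $\tau_{\mathcal{F}}^L$---but both refer to the common family $U_{F,n} = \{(i,k) \mid i \leq n,\ k \in F\} \cup A_n$, so the verification is immediate.
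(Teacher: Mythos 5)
Your proposal is correct and follows essentially the same route as the paper: the paper's proof defines exactly the map $f(\tau_{\mathcal{F}}^L)=\mathcal{F}$, $f(\tau_L)=1$, invokes Lemmas~\ref{lemma4} and~\ref{lemma5} for bijectivity, and records the observation that $\mathcal{F}_1\leq\mathcal{F}_2$ iff $\tau_{\mathcal{F}_1}^L\leq\tau_{\mathcal{F}_2}^L$. You simply spell out the mutual-inverse and monotonicity checks that the paper leaves as routine verifications.
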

\begin{proof}
By Lemma~\ref{lemma4}, for each topology $\tau\in{\downarrow}^\circ\tau_L$ there exists a shift-invariant filter $\mathcal{F}\in \mathcal{SIF}$ such that $\tau=\tau_{\mathcal{F}}^L$. We define the map $f:{\downarrow}\tau_L\rightarrow \mathcal{SIF}^1$ as follows:
\begin{itemize}
\item $f(\tau_{\mathcal{F}}^L)=\mathcal{F}$ for each topology $\tau_{\mathcal{F}}^L\in{\downarrow}^\circ\tau_L$;
\item $f(\tau_L)= 1$.
\end{itemize}
Let $\mathcal{F}_1\in\mathcal{SIF}$ and $\mathcal{F}_2\in\mathcal{SIF}$.
Observe that $\mathcal{F}_1\leq \mathcal{F}_2$ iff $\tau_{\mathcal{F}_1}^L\leq \tau_{\mathcal{F}_2}^L$.
Lemmas~\ref{lemma4} and~\ref{lemma5} imply that the map $f$ is an order isomorphism between posets ${\downarrow}\tau_L$ and $\mathcal{SIF}^1$.
\end{proof}

\begin{corollary}
The poset $\mathcal{SIF}^1$ is a complete lattice.
\end{corollary}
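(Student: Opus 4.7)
The plan is to pull the complete-lattice structure back through the order isomorphism $f\colon {\downarrow}\tau_L\to\mathcal{SIF}^1$ established in Theorem~\ref{th1}. Since $f$ is an order isomorphism, it preserves all existing suprema and infima; hence it suffices to show that ${\downarrow}\tau_L$ is itself a complete lattice.

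For this, I would invoke the Corollary stating that the ambient poset $\mathcal{SCT}$ is a complete lattice (which in turn rests on Lemma~\ref{l0} and the observation that $\tau_c$ is its least element). The standard fact I want to use is that a principal downset in a complete lattice is again a complete lattice under the induced order. Concretely, given an arbitrary non-empty subset $T=\{\tau_\alpha\}_{\alpha\in A}\subseteq{\downarrow}\tau_L$, let $\sigma=\sup_{\mathcal{SCT}}T$ and $\iota=\inf_{\mathcal{SCT}}T$ be the supremum and infimum computed in $\mathcal{SCT}$. Since $\tau_L$ is an upper bound for $T$ in $\mathcal{SCT}$, the least upper bound satisfies $\sigma\leq\tau_L$, so $\sigma\in{\downarrow}\tau_L$; and $\iota\leq\tau_\alpha\leq\tau_L$ for any fixed $\alpha\in A$, so $\iota\in{\downarrow}\tau_L$. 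Thus $\sigma$ and $\iota$ serve as supremum and infimum of $T$ inside ${\downarrow}\tau_L$, proving that ${\downarrow}\tau_L$ is a complete lattice.

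Finally, transferring along $f$: for any subset $\mathcal{S}\subseteq\mathcal{SIF}^1$, the preimage $f^{-1}(\mathcal{S})\subseteq{\downarrow}\tau_L$ has a supremum and infimum in ${\downarrow}\tau_L$, and their images under $f$ furnish the supremum and infimum of $\mathcal{S}$ in $\mathcal{SIF}^1$. This yields the claim. There is no real obstacle here; the entire content has been packed into the preceding lemmas, so the corollary is essentially a one-line deduction once one observes that principal downsets in complete lattices inherit completeness.
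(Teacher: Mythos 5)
Your argument is correct and is exactly the intended deduction: the paper leaves this corollary without proof precisely because it follows from Theorem~\ref{th1} together with the completeness of $\mathcal{SCT}$ (hence of the principal downset ${\downarrow}\tau_L$) via the order isomorphism. Nothing is missing.
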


Let $\tau$ be an arbitrary shift-continuous topology on $\mathcal{C}^0$ such that $\tau\in {\downarrow}^{\circ}\tau_R$.
For each open neighborhood $U\in\tau$ of $0$ and $i\in\omega$ put $G^U_i=\{n\in\omega\mid (n,i)\in U\}$.

The proof of the following lemma is similar to Lemma~\ref{lemma3a}.
\begin{lemma}\label{lemma3b}
Let $\tau\in {\downarrow}^\circ \tau_R$. Then for each $i\in \omega$ the set $\mathcal{G}_i=\{G^U_i\mid 0\in U\in \tau\}$ is a filter on $\omega$. Moreover, $\mathcal{G}_i=\mathcal{G}_j$ for each $i,j\in \omega$.
\end{lemma}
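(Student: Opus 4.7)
The plan is to mirror the proof of Lemma~\ref{lemma3a} under the coordinate swap $(a,b)\leftrightarrow(b,a)$ that exchanges the roles of $\tau_L$ and $\tau_R$; the structural consequence is that every left multiplication used in Lemma~\ref{lemma3a} is replaced by a right multiplication. Concretely, where Lemma~\ref{lemma3a} exploited the identities $(n,i_0)\cdot(i_0,j)=(n,j)$ and $(0,j-i)\cdot(j,n)=(i,n)$ to transport infinite intersections and filter elements between rows, I would use the identities $(j,i_0)\cdot(i_0,n)=(j,n)$, $(n,j)\cdot(j,i)=(n,i)$ and $(n,i)\cdot(i,j)=(n,j)$ to do the analogous work between columns.

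First I would establish that for every $k\in\omega$ the point $0$ is an accumulation point of $\{(n,k):n\in\omega\}$ in $(\mathcal{C}^0,\tau)$. Since $\tau\in{\downarrow}^\circ\tau_R$, some $B_m\in\mathcal{B}_R(0)$ has $U\setminus B_m$ infinite for every open neighbourhood $U\in\tau$ of $0$; the pigeonhole principle (as in Lemma~\ref{lemma3}, case (2)) yields $i_0\leq m$ such that $U\cap\{(n,i_0):n\in\omega\}$ is infinite for every such $U$. For arbitrary $k\in\omega$, separate continuity of right translation by $(i_0,k)$ at $0$ produces $W\in\tau$ with $W\cdot(i_0,k)\subset U$; combined with the identity $(j,i_0)\cdot(i_0,k)=(j,k)$, this forces $U\cap\{(n,k):n\in\omega\}$ to be infinite. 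Next I would verify the filter axioms for $\mathcal{G}_i$: closure under finite intersection follows from $G_i^{U\cap V}=G_i^U\cap G_i^V$, and for upward closure, if $G_i^U\subset A\subset\omega$, then the set $V=U\cup\{(n,i):n\in A\}$ is open (all non-zero points being isolated) with $G_i^V=A$.

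Finally, to show $\mathcal{G}_i=\mathcal{G}_j$ for $i<j$, it suffices to prove the two inclusions separately. For $\mathcal{G}_j\subset\mathcal{G}_i$ in the sense of refinement, given $G_i^U$, use separate continuity to pick $V\in\tau$ with $V\cdot(j,i)\subset U$; then $n\in G_j^V$ gives $(n,j)\in V$, hence $(n,i)=(n,j)\cdot(j,i)\in U$, so $G_j^V\subset G_i^U$. Symmetrically, find $V'\in\tau$ with $V'\cdot(i,j)\subset U'$ for a given $G_j^{U'}$, so that $(n,j)=(n,i)\cdot(i,j)\in U'$ whenever $n\in G_i^{V'}$.

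I do not foresee a substantive obstacle: the proof is a coordinate transposition of Lemma~\ref{lemma3a}, and the only point deserving care is keeping the two cases of the bicyclic multiplication rule straight when verifying the identities $(n,j)\cdot(j,i)=(n,i)$ and $(n,i)\cdot(i,j)=(n,j)$ (in both, $b=c$, so the first case of the definition applies and the computation is immediate).
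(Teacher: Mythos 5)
Your proposal is correct and is exactly the argument the paper intends: the paper gives no separate proof of Lemma~\ref{lemma3b}, stating only that it is similar to Lemma~\ref{lemma3a}, and your coordinate-transposed version (right translations in place of left, columns in place of rows, with the identities $(n,i_0)\cdot(i_0,k)=(n,k)$, $(n,j)\cdot(j,i)=(n,i)$ and $(n,i)\cdot(i,j)=(n,j)$ all correctly verified) is precisely that similar proof spelled out.
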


Let $\mathcal{G}$  be a shift-invariant filter on $\omega$. By $\tau_{\mathcal{G}}^R$
we denote a topology on $\mathcal{C}^0$ which is defined as follows:
each non-zero element is isolated in $(\mathcal{C}^0,\tau_{\mathcal{G}}^R)$ and the family
$\mathcal{B}=\{U_{G,n}\mid G\in \mathcal{G}\hbox{ and } n\in\omega\}$ where $U_{G,n}=\{(k,i)\mid i\leq n\hbox{ and }k\in G\}\cup B_n$
forms an open neighborhood base of $0$ in $(\mathcal{C}^0,\tau_{\mathcal{G}}^R)$.

Similarly it can be proved analogues of lemmas~\ref{lemma4} and~\ref{lemma5} for a topology $\tau\in{\downarrow}^\circ\tau_R$.
Hence we obtain the following:

\begin{theorem}\label{th2}
The posets ${\downarrow}\tau_R$ and $\mathcal{SIF}^1$ are order isomorphic.
\end{theorem}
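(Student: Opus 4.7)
The plan is to translate the proof of Theorem~\ref{th1} to the right-sided setting, exchanging the roles of the two coordinates of $\omega \times \omega$. Lemma~\ref{lemma3b} plays the role of Lemma~\ref{lemma3a}, so every topology $\tau \in {\downarrow}^\circ \tau_R$ produces a well-defined filter $\mathcal{G}_\tau$ on $\omega$ via the sets $G^U_i = \{n \in \omega : (n, i) \in U\}$.

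First I would establish the analogue of Lemma~\ref{lemma4}: every basic neighborhood of $0$ in $\tau$ has the form $U_{G,n} = \{(k, i) : i \leq n,\ k \in G\} \cup B_n$ with $G \in \mathcal{G}_\tau$, cofinite sets lie in $\mathcal{G}_\tau$ by Hausdorffness, and $\mathcal{G}_\tau$ is shift-invariant. The shift-invariance is the heart of the matter. Applying separate continuity to $(1,0) \cdot 0 = 0$ yields a neighborhood $V$ with $(1,0) \cdot V \subset U_{F,0}$; since $(1,0) \cdot (k,0) = (k+1, 0)$, this forces $H + 1 \subset F$ for some $H \in \mathcal{G}_\tau$. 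Applying it to $(0,1) \cdot 0 = 0$ and noting that $(0,1) \cdot (k, 0) = (k-1, 0)$ for $k \geq 1$, the symmetric argument (after intersecting with a cofinite set to absorb the exceptional point $k=0$) produces $H \in \mathcal{G}_\tau$ with $H - 1 \subset F$. Iterating yields $n + \mathcal{G}_\tau = \mathcal{G}_\tau$ for every $n \in \mathbb{Z}$. The implication $\tau_1 \neq \tau_2 \Rightarrow \mathcal{G}_{\tau_1} \neq \mathcal{G}_{\tau_2}$ follows from the explicit form of the basic neighborhoods.

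Next I would verify the analogue of Lemma~\ref{lemma5}: for each $\mathcal{G} \in \mathcal{SIF}$, the topology $\tau_{\mathcal{G}}^R$ is shift-continuous and lies in ${\downarrow}^\circ \tau_R$, and distinct filters yield distinct topologies. Since $\mathcal{C}$ is generated by $(0,1)$ and $(1,0)$ and every non-zero point is isolated in $\tau_{\mathcal{G}}^R$, separate continuity reduces to the four instances $(0,1) \cdot 0$, $(1,0) \cdot 0$, $0 \cdot (0,1)$, $0 \cdot (1,0)$; in each case shift-invariance supplies $H \in \mathcal{G}$ with the appropriate shift $H + 1$ or $H - 1$ contained in $F$, and choosing the truncation parameter of the source neighborhood to be $n+1$ absorbs the boundary effects where the bicyclic rule flips between its two defining cases. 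Finally, the map $f : {\downarrow}\tau_R \to \mathcal{SIF}^1$ defined by $f(\tau_{\mathcal{G}}^R) = \mathcal{G}$ and $f(\tau_R) = 1$ is an order isomorphism because $\mathcal{G}_1 \subseteq \mathcal{G}_2$ is equivalent to $\tau_{\mathcal{G}_1}^R \subseteq \tau_{\mathcal{G}_2}^R$ directly from the description of the neighborhood bases. Rather than a genuine obstacle, the only delicate point is the bookkeeping around the bicyclic multiplication rule: one must track which of the two defining cases applies to each relevant product and choose truncation parameters large enough that the filter $\mathcal{G}$ still controls all image rows.
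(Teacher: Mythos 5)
Your proposal is correct and follows exactly the route the paper intends: the paper proves Theorem~\ref{th2} by simply invoking the analogues of Lemmas~\ref{lemma4} and~\ref{lemma5} for ${\downarrow}^{\circ}\tau_R$ (with Lemma~\ref{lemma3b} in place of Lemma~\ref{lemma3a}) and repeating the argument of Theorem~\ref{th1}, which is precisely what you carry out, including the correct bookkeeping of the bicyclic multiplication on the first coordinate. No gaps.
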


\begin{proposition}\label{pr2}
For each topology $\tau\in \mathcal{W}$ there exists a unique pair of topologies $(\tau_{1},\tau_{2})\in {\downarrow}\tau_L{\times}{\downarrow}\tau_R$ such that $\tau=\tau_{1}{\vee}\tau_{2}$.
\end{proposition}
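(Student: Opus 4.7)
The plan is as follows. I define $\tau_1:=\tau\cap\tau_L$ and $\tau_2:=\tau\cap\tau_R$ (as families of open sets) and show they yield the unique decomposition. The filter correspondence of Theorems~\ref{th1} and~\ref{th2} will be the main tool for uniqueness.

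For existence, using that each non-zero point is isolated in $\tau$, one checks directly that $\tau_1$ coincides with the topology on $\mathcal{C}^0$ whose non-zero elements are isolated and whose open neighborhood base at $0$ consists of the sets $U\cup A_n$ with $U\in\tau$, $0\in U$, and $n\in\omega$. In particular $\tau_1$ is shift-continuous, Hausdorff, and contained in $\tau_L$; symmetrically $\tau_2\in{\downarrow}\tau_R$. The containment $\tau_1\vee\tau_2\subseteq\tau$ is immediate. For the reverse, given $U\in\tau$ with $0\in U$, pick $n$ with $C_n\subseteq U$ (using $\tau\subseteq\tau_{\min}$); the identity $U=(U\cup A_n)\cap(U\cup B_n)$, valid because $A_n\cap B_n=C_n\subseteq U$, expresses $U$ as an intersection of a basic $\tau_1$-open and a basic $\tau_2$-open, so $U\in\tau_1\vee\tau_2$.

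For uniqueness, suppose $\tau=\tau_1'\vee\tau_2'$ for some $(\tau_1',\tau_2')\in{\downarrow}\tau_L\times{\downarrow}\tau_R$. The inclusion $\tau_1'\subseteq\tau\cap\tau_L=\tau_1$ is automatic. To force equality, suppose $\tau_1'\subsetneq\tau_1$. Via Theorem~\ref{th1} the associated filters satisfy $\mathcal{F}_1'\subsetneq\mathcal{F}_1$ in $\mathcal{SIF}^1$. Pick $F\in\mathcal{F}_1\setminus\mathcal{F}_1'$ and let $W$ denote the basic $\tau_1$-open $U_{F,0}$ of Lemma~\ref{lemma5} (or $W:=A_0$ in the boundary case $\mathcal{F}_1=1$). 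Then $W\in\tau_1\setminus\tau_1'$, and from $W\in\tau$ one finds basic neighborhoods $V_1=U_{F_1',n_1'}$ and $V_2=U_{G_2',n_2'}$ of $0$ in $\tau_1'$ and $\tau_2'$ respectively, with $V_1\cap V_2\subseteq W$. Inspecting the row-$0$ slice $\{k\in\omega\mid(0,k)\in V_1\cap V_2\}$: it equals $F_1'$ if $0\in G_2'$ and $F_1'\cap(n_2',\infty)$ otherwise; both are non-empty elements of $\mathcal{F}_1'$. The inclusion $V_1\cap V_2\subseteq W$ forces this non-empty member of $\mathcal{F}_1'$ into $\{k\mid(0,k)\in W\}$, which equals $F$ in the generic case (whence $F\in\mathcal{F}_1'$ by upward closure, contradicting the choice of $F$) or $\emptyset$ in the boundary case (a direct absurdity). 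Hence $\tau_1'=\tau_1$, and symmetrically $\tau_2'=\tau_2$.

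The main obstacle is uniqueness, which requires a careful row-$0$ analysis of an arbitrary basic open of $\tau_1'\vee\tau_2'$, a case-split on whether $0\in G_2'$, and separate treatment of the boundary situations $\mathcal{F}_1=1$ (so $\tau_1=\tau_L$, using $W=A_0$) and $\mathcal{G}_2'=1$ (so $\tau_2'=\tau_R$, handled by formally treating it as the $0\notin G_2'$ case).
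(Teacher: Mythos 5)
Your proof is correct, but it takes a genuinely different route from the paper's. The paper argues by a case analysis on which rows and columns of $\C$ accumulate at $0$ (three cases, according to whether only rows, only columns, or both rows and columns with index $\leq n$ meet every neighbourhood of $0$ infinitely), re-runs the arguments of Lemmas~\ref{lemma3a} and~\ref{lemma4} in each case to extract one or two shift-invariant filters together with an explicit neighbourhood base $\{C_{F,n}\}$, $\{C_{G,n}\}$ at $0$, and reads the decomposition $\tau=\tau_{\mathcal{F}}^L\vee\tau_{\mathcal{G}}^R$ off that description; uniqueness is left essentially implicit in the classification. You instead define the components canonically as $\tau_1=\tau\cap\tau_L$ and $\tau_2=\tau\cap\tau_R$ and get existence from the set identity $U=(U\cup A_n)\cap(U\cup B_n)$ whenever $C_n\subseteq U$, which avoids the case split entirely, and your uniqueness argument (maximality of $\tau\cap\tau_L$ among admissible first components, plus the row-$0$ slice computation showing that a strictly smaller filter cannot produce $U_{F,0}$, with the boundary cases $\mathcal{F}_1=1$ and $\tau_2'=\tau_R$ treated separately) is more explicit than anything the paper offers on this point. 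What the paper's approach buys is the explicit base $\{C_{F,n}\}$ and the notation $\tau_{x,y}$, which are reused later in Theorem~\ref{th3} and in the description of $\mathcal{W}_q$, so if your proof were substituted that description would still have to be recorded. One step you should spell out rather than wave at: the intersection of two shift-continuous topologies is not shift-continuous in general (this is exactly why Lemma~\ref{l0} computes infima as suprema of lower bounds rather than as intersections), so the verification that $\tau\cap\tau_L$ is shift-continuous and Hausdorff --- via the base $\{U\cup A_n\mid 0\in U\in\tau,\ n\in\omega\}$ and the inclusions $(i,j)\cdot A_{n+j}\subseteq A_n$ and $A_n\cdot(i,j)\subseteq A_n$ --- is a genuine (if easy) step, not a formality.
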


\begin{proof}
Let $\tau\in {\downarrow}^\circ \tau_{\min}$. Then there exists $n\in\omega$ such that for each open neighborhood $U\in\tau$ of $0$ the set $U\setminus C_n$ is infinite.

It is easy to see that $\tau$ satisfies one of the following three conditions.
\begin{itemize}
\item[(1)] For every $i\leq n$ the set $\{(i,m)\mid m\in\omega\}\cap U$ is infinite for each open neighborhood $U\in\tau$ of $0$ and there exists an open neighborhood $U_0\in\tau$ of $0$ such that $\{(m,j)\mid m\in\omega\}\cap U_0=\emptyset$ for any $j\leq n$.
\item[(2)] For every $j\leq n$ the set $\{(m,j)\mid m\in\omega\}\cap U$ is infinite for each open neighborhood $U\in \tau$ of $0$ and there exists an open neighborhood $U_0\in\tau$ of $0$ such that $\{(i,m)\mid m\in\omega\}\cap V=\emptyset$ for any $i\leq n$.
\item[(3)] For every $i,j\leq n$ the sets $\{(i,m)\mid m\in\omega\}\cap U$ and $\{(m,j)\mid m\in\omega\}\cap U$ are infinite for each open neighborhood $U\in\tau$ of $0$.
\end{itemize}
Assume that $\tau$ satisfies condition (1). Fix a non-negative integer $k$ and an arbitrary open neighborhood $U\in\tau$ of $0$. Similar arguments as in the proof of lemmas~\ref{lemma3a},~\ref{lemma4} imply that there exists a shift-invariant filter $\mathcal{F}$ on $\omega$ such that the family $\mathcal{B}(0)=\{C_{F,n}\mid F\in\mathcal{F},n\in\omega\}$, where
$C_{F,n}=C_n\cup\{(i,k)\mid i\leq n\hbox{ and }k\in F\}$, is an open neighborhood base at $0$ of the topology $\tau$.

It is easy to check that $\tau= \tau_{\mathcal{F}}^L{\vee}\tau_R$. We denote such a topology $\tau$ by $\tau_{\mathcal{F},1}$.

Assume that $\tau$ satisfies condition (2). Similar arguments imply that there exists a shift stable filter $\mathcal{G}$ on $\omega$ such that the family $\mathcal{B}(0)=\{C_{G,n}\mid G\in\mathcal{G},n\in\omega\}$, where
$C_{G,n}=C_n\cup\{(k,i)\mid i\leq n\hbox{ and }k\in G\}$, is an open neighborhood base at $0$ of the topology $\tau$.

Then $\tau= \tau_L{\vee}\tau_{\mathcal{G}}^R$. We denote such a topology $\tau$ by $\tau_{1,\mathcal{G}}$.

If the topology $\tau$ satisfies condition (3), then there exist shift-invariant filters $\mathcal{F},\mathcal{G}$ on $\omega$ such that $\tau=\tau_{\mathcal{F}}^L{\vee}\tau_{\mathcal{G}}^R$. We denote such a topology $\tau$ by $\tau_{\mathcal{F},\mathcal{G}}$.

Recall that $\tau_{\min}=\tau_L{\vee}\tau_R$. For convenience we denote $\tau_{\min}$ by $\tau_{1,1}$.
\end{proof}


\begin{theorem}\label{th3}
The poset $\mathcal{W}$ is order isomorphic to the poset $\mathcal{SIF}^1{\times}\mathcal{SIF}^1$.
\end{theorem}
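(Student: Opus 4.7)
The plan is to use Proposition~\ref{pr2} to decompose each $\tau \in \mathcal{W}$ uniquely as $\tau = \tau_1 \vee \tau_2$ with $\tau_1 \in {\downarrow}\tau_L$ and $\tau_2 \in {\downarrow}\tau_R$, then to apply the isomorphisms $f_L\colon {\downarrow}\tau_L \to \mathcal{SIF}^1$ and $f_R\colon {\downarrow}\tau_R \to \mathcal{SIF}^1$ supplied by Theorems~\ref{th1} and~\ref{th2}. Concretely, I would define $\Phi\colon \mathcal{W} \to \mathcal{SIF}^1 \times \mathcal{SIF}^1$ by $\Phi(\tau) = (f_L(\tau_1), f_R(\tau_2))$ and show it is an order isomorphism. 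Bijectivity is essentially free: surjectivity is witnessed by $f_L^{-1}(a) \vee f_R^{-1}(b)$, and injectivity follows from the uniqueness clause of Proposition~\ref{pr2} combined with the injectivity of $f_L$ and $f_R$.

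One direction of the order equivalence is immediate: if $a_1 \leq a_2$ and $b_1 \leq b_2$ in $\mathcal{SIF}^1$, then Theorems~\ref{th1} and~\ref{th2} give $f_L^{-1}(a_1) \subseteq f_L^{-1}(a_2)$ and $f_R^{-1}(b_1) \subseteq f_R^{-1}(b_2)$, so their joins are nested. The converse is where the real work lies: given $\tau \subseteq \tau'$ in $\mathcal{W}$ with decompositions $\tau = \tau_1 \vee \tau_2$ and $\tau' = \tau_1' \vee \tau_2'$, I must show $\tau_1 \subseteq \tau_1'$ and, symmetrically, $\tau_2 \subseteq \tau_2'$. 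For the left factor I proceed by cases. If $\tau_1' = \tau_L$, there is nothing to prove. If instead $\tau_1' = \tau_{\mathcal{F}'}^L$ for some $\mathcal{F}' \in \mathcal{SIF}$, then $\tau'$ falls under case~(1) or~(3) of Proposition~\ref{pr2}, and in either case every basic neighborhood of $0$ in $\tau'$ meets some row $\{(i,m) \mid m \in \omega\}$ infinitely, whereas the set $A_n \in \tau_L$ is disjoint from these rows; hence $\tau_L \not\subseteq \tau'$, and therefore $\tau_L \not\subseteq \tau$, so $\tau_1 = \tau_{\mathcal{F}}^L$ for some $\mathcal{F} \in \mathcal{SIF}$. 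Since by Lemma~\ref{lemma3a} the filters $\mathcal{F}$ and $\mathcal{F}'$ are obtained as $\{F_i^U \mid 0 \in U \in \tau\}$ and $\{F_i^U \mid 0 \in U \in \tau'\}$ respectively (for any fixed $i \in \omega$), the inclusion $\tau \subseteq \tau'$ immediately forces $\mathcal{F} \subseteq \mathcal{F}'$, whence $\tau_1 \subseteq \tau_1'$ via Lemma~\ref{lemma5}.

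The principal obstacle is exactly this monotonicity of the decomposition, since in an arbitrary lattice of topologies $\tau_1 \vee \tau_2 \subseteq \tau_1' \vee \tau_2'$ does not in general imply componentwise inclusion; here the argument depends crucially on the structural description of the factors furnished by Lemmas~\ref{lemma3a}--\ref{lemma5} together with the trichotomy set up in the proof of Proposition~\ref{pr2}. Once this monotonicity is secured, composing with the coordinate isomorphisms from Theorems~\ref{th1} and~\ref{th2} delivers the desired order isomorphism $\mathcal{W} \cong \mathcal{SIF}^1 \times \mathcal{SIF}^1$.
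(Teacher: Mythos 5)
Your proposal is correct and takes essentially the same route as the paper: decompose $\tau$ via Proposition~\ref{pr2} and transport the order through the coordinate isomorphisms of Theorems~\ref{th1} and~\ref{th2}. The paper compresses the componentwise monotonicity of the decomposition into the phrase ``routine verifications''; you correctly single this out as the only nontrivial point and your case analysis (using the trichotomy from Proposition~\ref{pr2} and the filter extraction of Lemma~\ref{lemma3a}) establishes it.
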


\begin{proof}
By Proposition~\ref{pr2}, each topology $\tau\in\mathcal{W}$ is of the form $\tau_{x,y}$, where $x,y\in \mathcal{SIF}^1$. The routine verifications show that the map $f:\mathcal{W}\rightarrow \mathcal{SIF}^1{\times}\mathcal{SIF}^1$, $f(\tau_{x,y})=(x,y)$ is an order isomorphism.
\end{proof}

An inverse semigroup $S$ is called {\em quasitopological} if it is semitopological and the inversion is continuous in $S$. A topology $\tau$ on an inverse semigroup $S$ is called {\em quasisemigroup} if $(S,\tau)$ is a quasitopological semigroup.

By $\mathcal{W}_q$ we denote the set of all weak quasisemigroup topologies on $\C^0$. Obviously, $\mathcal{W}_q$ is a sublattice of $\mathcal{W}$. Fix a weak topology $\tau=\tau_{x,y}$ where $x,y\in \mathcal{SIF}^1$ (see the proof of Proposition~\ref{pr2}). Observe that $(n,m)^{-1}=(m,n)$ for each element $(n,m)\in \C$. At this point it is easy to see that the topology $\tau_{x,y}$ is quasisemigroup iff $x=y$. Hence we obtain the following:

\begin{proposition}
The lattice $\mathcal{W}_q$ is isomorphic to the lattice $\mathcal{SIF}^1$.
\end{proposition}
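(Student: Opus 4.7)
The plan is to bootstrap off Theorem~\ref{th3}, whose isomorphism $\mathcal{W}\to \mathcal{SIF}^1{\times}\mathcal{SIF}^1$ sends $\tau_{x,y}\mapsto (x,y)$, and to show that the continuity of inversion exactly cuts out the diagonal $\{(x,x):x\in\mathcal{SIF}^1\}$. Since $\mathcal{W}_q$ is defined as a sublattice of $\mathcal{W}$, it suffices to identify which topologies $\tau_{x,y}\in\mathcal{W}$ make the inversion map $\iota\colon\mathcal{C}^0\to\mathcal{C}^0$, $(n,m)\mapsto(m,n)$, $0\mapsto 0$, continuous.

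First I would set the bookkeeping straight using the construction in Proposition~\ref{pr2}: each $\tau_{x,y}\in\mathcal{W}$ is $\tau_x^L{\vee}\tau_y^R$, with the conventions $\tau_1^L=\tau_L$ and $\tau_1^R=\tau_R$. Because all nonzero points are isolated, continuity of $\iota$ is equivalent to continuity at $0$, so I only need to examine the preimages of a neighborhood base at $0$. Reading off the bases from the definitions of $\tau_{\mathcal{F}}^L$ and $\tau_{\mathcal{G}}^R$, the inversion interchanges coordinates of each basic rectangle, so that $\iota^{-1}(U_{F,n}^L)=U_{F,n}^R$ and $\iota^{-1}(U_{G,n}^R)=U_{G,n}^L$, while $\iota^{-1}(A_n)=B_n$ and $\iota^{-1}(B_n)=A_n$. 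Passing to suprema, $\iota^{-1}(\tau_x^L{\vee}\tau_y^R)=\tau_x^R{\vee}\tau_y^L=\tau_{y,x}$.

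Consequently $\iota$ is continuous on $(\mathcal{C}^0,\tau_{x,y})$ iff $\tau_{y,x}\subset\tau_{x,y}$, and since $\iota$ is an involution this is equivalent to $\tau_{x,y}=\tau_{y,x}$. By the uniqueness part of Theorem~\ref{th3}, this forces $(x,y)=(y,x)$, i.e.\ $x=y$. Hence $\mathcal{W}_q=\{\tau_{x,x}:x\in\mathcal{SIF}^1\}$. The restriction of the isomorphism of Theorem~\ref{th3} to $\mathcal{W}_q$ lands in the diagonal of $\mathcal{SIF}^1{\times}\mathcal{SIF}^1$, which is itself order-isomorphic to $\mathcal{SIF}^1$ via the projection $(x,x)\mapsto x$; the composition $\tau_{x,x}\mapsto x$ is the required lattice isomorphism.

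I do not expect a serious obstacle: the only point requiring care is the boundary behavior when $x$ or $y$ equals the top element $1$, but the same argument applies verbatim since $\iota^{-1}(A_n)=B_n$ shows that $\iota$ swaps $\tau_L$ and $\tau_R$, so that $\tau_{1,y}$ and $\tau_{x,1}$ are quasisemigroup topologies only when $y=1$ and $x=1$ respectively, in agreement with the diagonal description.
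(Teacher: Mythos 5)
Your proposal is correct and follows exactly the route the paper takes: the paper merely observes that $(n,m)^{-1}=(m,n)$ and asserts that $\tau_{x,y}$ is quasisemigroup iff $x=y$, and your computation $\iota^{-1}(U_{F,n}^L)=U_{F,n}^R$, $\iota^{-1}(A_n)=B_n$, hence $\iota^{-1}(\tau_{x,y})=\tau_{y,x}$, together with the involution argument and the uniqueness of the pair $(x,y)$ from Proposition~\ref{pr2}, is precisely the verification the paper leaves as ``easy to see.'' The identification of $\mathcal{W}_q$ with the diagonal of $\mathcal{SIF}^1{\times}\mathcal{SIF}^1$ and hence with $\mathcal{SIF}^1$ is then immediate, as you say.
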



At the end of this section we prove a nice complete-like property of weak topologies on $\C^0$.

A semitopological semigroup $X$ is called {\em absolutely H-closed} if for any continuous homomorphism $h$ from $X$ into a Hausdorff semitopological semigroup $Y$ the image $h(X)$ is closed in $Y$.

The next proposition complements results about complete polycyclic monoids $\mathcal{P}_k$, $k>1$ obtained in~\cite{Bardyla-Gutik-2016}.

\begin{proposition}\label{hcl}
Let $(\C^0,\tau)$ be a semitopological semigroup such that $0$ is an accumulation point of the set $E(\C)$. Then $(\C^0,\tau)$ is absolutely H-closed.
\end{proposition}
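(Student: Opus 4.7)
The plan is to argue by contradiction. Suppose $h\colon(\C^0,\tau)\to Y$ is a continuous homomorphism into a Hausdorff semitopological semigroup $Y$ and there exists $y\in\overline{h(\C^0)}\setminus h(\C^0)$; write $y=\lim_\beta h(t_\beta)$ for some net $(t_\beta)\subset\C^0$. Discarding the trivial possibilities (if $t_\beta=0$ frequently then $y=h(0)$, and if $(t_\beta)$ takes only finitely many values in $\C$ then $y=h(a,b)$ for some pair), I may pass to a subnet with $t_\beta=(a_\beta,b_\beta)\in\C$ having infinitely many distinct values. A pigeonhole on the two coordinates then reduces the proof to one of three cases: (I) $a_\beta\to\infty$ with $b_\beta$ constant, (II) $a_\beta$ constant with $b_\beta\to\infty$, or (III) both $a_\beta\to\infty$ and $b_\beta\to\infty$.

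Setting $f:=h(0)$, the identities $0\cdot s=s\cdot 0=0$ in $\C^0$ give $fh(s)=h(s)f=f$ for every $s\in\C^0$, and separate continuity of multiplication in $Y$ propagates this through the closure to yield $fy=yf=f$. On the other hand, since $(\C^0,\tau)$ is Hausdorff (hence $T_1$) and $0$ is an accumulation point of $E(\C)=\{e_n:n\in\w\}$ with $e_n:=(n,n)$, every $\tau$-neighborhood of $0$ meets $E(\C)\setminus\{e_0,\dots,e_N\}$ for each $N\in\w$. Indexing over the directed set of pairs (neighborhood of $0$, $N$) produces a net $(e_{n_\gamma})$ with $n_\gamma\to\infty$ and $e_{n_\gamma}\to 0$ in $\tau$; continuity of $h$ then gives $h(e_{n_\gamma})\to f$ in $Y$.

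The crux is the absorption identity in $\C$: $(n,n)(a,b)=(a,b)$ whenever $n\le a$, and symmetrically $(a,b)(n,n)=(a,b)$ whenever $n<b$. In cases~(I) and~(III), fix $\gamma$; since $a_\beta\to\infty$, eventually $a_\beta\ge n_\gamma$ and hence $e_{n_\gamma}t_\beta=t_\beta$, so $h(e_{n_\gamma})h(t_\beta)=h(t_\beta)$. Passing to $\lim_\beta$ and using separate continuity of left translation by $h(e_{n_\gamma})$ in $Y$, one obtains $h(e_{n_\gamma})\cdot y=y$ for every $\gamma$. Taking now $\lim_\gamma$ together with separate continuity of right translation by $y$ yields $f\cdot y=y$; combined with $fy=f$ from the previous paragraph, this forces $y=f\in h(\C^0)$, a contradiction. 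Case~(II) is handled symmetrically: right absorption gives $t_\beta\cdot e_{n_\gamma}=t_\beta$ eventually, whence $y\cdot f=y=f$, contradicting $y\notin h(\C^0)$ again.

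I expect the only nontrivial point to be the construction of the idempotent net $(e_{n_\gamma})$ with $n_\gamma\to\infty$: the hypothesis provides only that $0$ is an accumulation point of $E(\C)$, not that the sequence $(e_n)$ itself converges to $0$, so one must not pretend that $(\C^0,\tau)$ is first countable but rather build the net explicitly over neighborhoods of $0$ crossed with $\w$. Once this net is in hand, all three subcases reduce to the short absorption calculation in $\C$ combined with one application of separate continuity on each side, and I expect no further substantial obstacle.
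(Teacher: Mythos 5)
Your proof is correct. The engine of your argument is the same one the paper uses in its hardest case: the absorption identities $(n,n)\cdot(a,b)=(a,b)$ for $n\le a$ and $(a,b)\cdot(n,n)=(a,b)$ for $n<b$, the fact that $0$ is a two-sided zero (so $h(0)h(s)=h(s)h(0)=h(0)$ propagates to the closure), and a net of idempotents $e_{n_\gamma}\to 0$ with $n_\gamma\to\infty$, whose existence you correctly extract from the accumulation hypothesis plus $T_1$-ness rather than from any first-countability. Where you genuinely diverge is in the global structure. The paper first invokes the classification of congruences on $\C$ (Lawson's Theorem~3.4.5) to split into the annihilating case, a finite image, an image isomorphic to $\mathbb{Z}^0$ (ruled out by showing $0$ accumulates every class of the least group congruence), and the injective case, and only in the last case runs the idempotent-absorption argument. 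You skip the algebraic classification entirely: your net argument applies uniformly to an arbitrary continuous homomorphism, since nothing in the computation $h(e_{n_\gamma})\cdot y=y$, $f\cdot y=f$ requires $h$ to be injective or its image to have any particular structure. This buys a shorter, self-contained proof that needs no input about the congruence lattice of the bicyclic monoid; what the paper's route buys in exchange is the explicit (and independently useful) description of what the homomorphic images of $\C^0$ can look like. Your reductions on the net $(t_\beta)$ (discarding the cases $t_\beta=0$ frequently and finitely many values, then extracting subnets so each coordinate is either constant or tends to infinity) are standard and correctly handled, so I see no gap.
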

\begin{proof}
By $\sigma$ we denote the least group congruence on $\mathcal{C}$.  According to Theorem~3.4.5 from~\cite{Lawson-2009}, $(a,b)\sigma (c,d)$ iff $a-b=c-d$ for any $a,b,c,d\in\w$, every congruence on the bicyclic monoid is a group congruence and  $\mathcal{C}/\sigma$ is isomorphic to the additive group of integers. For each $k\in\mathbb{Z}$ put $[k]=\{(a,b)\in \mathcal{C}\mid a-b=k\}$.

Fix any $k\in\mathbb{Z}$ and an open neighborhood $U$ of $0$.
If $k>0$, then the separate continuity of the semigroup operation in $(C^0,\tau)$ yields an open neighborhood $V$ of $0$ such that $(k,0)\cdot V\subset U$. Since $0$ is an accumulation point of the set $E(\C)$ there exists an infinite subset $A\subset \w$ such that
$\{(n,n)\mid n\in A\}\subset V$. Then $(k,0)\cdot \{(n,n)\mid n\in A\}=\{(k+n,n)\mid n\in A\}\subset U$. Hence $0$ is an accumulation point of the set $[k]$ for each $k\geq 0$.

If $k<0$ then the separate continuity of the semigroup operation in $(C^0,\tau)$ yields an open neighborhood $V$ of $0$ such that $(0,k)\cdot V\subset U$. Similarly it can be shown that $0$ is an accumulation point of the set $[k]$ for each $k<0$.

Assume that $h$ is a continuous homomorphism from $\C^0$ into a Hausdorff semitopological semigroup $X$. If there exists $(n,m)\in\C$ such that $h(n,m)=h(0)$ then
$$h((0,0))=h((0,n)\cdot (n,m)\cdot (m,0))=h(0,n)\cdot h(0)\cdot h(m,0)=h((0,n)\cdot 0\cdot(m,0))=h(0).$$
In this case the map $h$ is annihilating.

Otherwise, by Theorem~3.4.5 from~\cite{Lawson-2009}, there are three cases to consider:
\begin{itemize}
\item[(1)] the image $h(\C^0)$ is finite;
\item[(2)] the image $h(\C^0)$ is isomorphic to the additive group of integers with an adjoint zero;
\item[(3)] $h$ is injective, i.e., $h(\C^0)$ is isomorphic to $\C^0$.
\end{itemize}

(1) The Hausdorffness of $X$ implies that $h(\C^0)$ is a closed subset of $X$.

(2) Observe that $0$ is an accumulation point of each equivalence class $[k]$ of the least group congruence $\sigma$. Hence each open neighborhood $U$ of $h(0)$ in $X$ contains the set $h(\C)$ which contradicts to the Hausdorffness of $X$. Hence this case is not possible.

(3) To obtain a contradiction, assume that $h(\C^0)$ is not closed in $X$. Hence there exists an element
$x\in \overline{h(\C^0)}\setminus h(\C^0)$. Each open neighborhood of $x$ contains infinitely many elements from $\C$. Hence for each open neighborhood $V$ of $x$ at least one of the following two subcases holds:
 \begin{itemize}
\item[(3.1)] the set $L_V=\{n\mid$ there exists $m$ such that $(n,m)\in V\}$ is infinite;
\item[(3.2)] the set $R_V=\{m\mid$ there exists $n$ such that $(n,m)\in V\}$ is infinite.
\end{itemize}

(3.1) We claim that $(k,k)\cdot x=x$ for each $k\in\w$. Indeed, fix any $k\in\w$ and observe that $(k,k)\cdot (n,m)=(n,m)$ for each $n\geq k$. Hence the set $((k,k)\cdot V)\cap V$ is infinite for each open neighborhood $V$ of $x$ witnessing that $(k,k)\cdot x=x$.

Fix any open neighborhood $U$ of $0$ which does not contain $x$. The separate continuity of the semigroup operation in $X$ implies that $0\cdot x=x\cdot 0=0$. Hence there exists an open neighborhood $W\subset U$ of $0$ such that $W\cdot x\subset U$. Fix any idempotent $(k,k)\in W$
(it is possible since $0$ is an accumulation point of the set $E(\C)=\{(k,k)\mid k\in\w\}$). The above claim implies that $x=(k,k)\cdot x\in W\cdot x\subset U$ which contradicts to the choice of the set $U$.

(3.2) Analogous it can be showed that $x\cdot (k,k)=x$ for each $k\in\w$. At this point the contradiction can be obtained similarly as in (3.1).
\end{proof}

Proposition~\ref{hcl} provides the following:
\begin{corollary}
For each weak topology $\tau$ the semitopological semigroup $(\C^0,\tau)$ is absolutely H-closed.
\end{corollary}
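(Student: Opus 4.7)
The plan is to derive this corollary directly from Proposition~\ref{hcl}: it suffices to verify that for every weak topology $\tau$ on $\C^0$, the point $0$ is an accumulation point of the set of idempotents $E(\C)=\{(k,k)\mid k\in\omega\}$ in $(\C^0,\tau)$. Once this is established, Proposition~\ref{hcl} immediately yields that $(\C^0,\tau)$ is absolutely H-closed.

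To check the accumulation-point condition, I would argue as follows. Let $\tau$ be a weak topology, so by definition $\tau\subseteq\tau_{\min}$. Fix an arbitrary open neighborhood $U\in\tau$ of $0$. Since $U\in\tau_{\min}$ and the family $\{C_n\mid n\in\omega\}$ with $C_n=\{0\}\cup\{(k,m)\mid k,m>n\}$ is a base at $0$ for $\tau_{\min}$, there exists $n\in\omega$ with $C_n\subseteq U$. But then every idempotent $(m,m)$ with $m>n$ lies in $C_n\subseteq U$, so $U$ contains infinitely many elements of $E(\C)$. As $U$ was arbitrary, $0$ is an accumulation point of $E(\C)$ in $(\C^0,\tau)$.

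Since $\tau$ is in particular shift-continuous (weak topologies belong to $\mathcal{W}\subseteq\mathcal{SCT}$), the pair $(\C^0,\tau)$ is a semitopological semigroup satisfying the hypothesis of Proposition~\ref{hcl}, and the conclusion follows. There is no genuine obstacle here; the only thing to verify is the elementary set-theoretic fact that each $\tau$-neighborhood of $0$, being $\tau_{\min}$-open, absorbs some $C_n$, which in turn absorbs the tail of $E(\C)$.
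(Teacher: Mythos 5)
Your proposal is correct and is exactly the argument the paper leaves implicit: since a weak topology $\tau$ is coarser than $\tau_{\min}$, every $\tau$-neighborhood of $0$ is $\tau_{\min}$-open and hence contains some $C_n$, which contains all idempotents $(m,m)$ with $m>n$, so $0$ accumulates on $E(\C)$ and Proposition~\ref{hcl} applies. No gaps; this matches the paper's route.
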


\section{Cardinal characteristics of the lattice $\mathcal{W}$}
By $[\w]^\w$ we denote the family of all infinite subsets of $\w$. We write $A\subset^* B$ if $|A\setminus B|<\w$.
Let $\mathcal{F}$ be a filter on $\omega$. The cardinal $\chi(\mathcal{F})=\min\{|\mathcal{B}|: \mathcal{B}$ is a base of the filter $\mathcal{F}\}$ is called the {\em character} of the filter $\mathcal{F}$.
A filter $\mathcal{F}$ is called {\em first-countable} if $\chi(\mathcal{F})=\omega$.

For each $a,b\in \omega$ by $[a,b]$ we denote the set $\{n\in\omega\mid a\leq n\leq b\}$. Observe that $[a,b]=\emptyset$ if $a>b$. Let $A$ be an infinite subset of $\omega$. For each $k\in\omega$ put $F_{A,k}=\cup_{n\in A}[n!-n+k,n!+n-k]$. Since $F_{A,k}\cap F_{A,n}=F_{A,\max\{k,n\}}$ the family $\{F_{A,k}\mid k\in\omega\}$ is closed under finite intersections. By $\mathcal{F}_A$ we denote the filter on $\omega$ which base consists of the sets $F_{A,k}$, $k\in\omega$.

\begin{lemma}\label{lc1} Let $A,B\in [\omega]^{\w}$. Then the following statements hold:
 \begin{itemize}
 \item [(1)] the filter $\mathcal{F}_A$ is shift-invariant;
 \item [(2)] if $A$ and $B$ are almost disjoint, then the filters $\mathcal{F}_{A}$ and $\mathcal{F}_{B}$ are incomparable in the lattice $\mathcal{SIF}$.
 \item [(3)] if $A\subset^* B$ then $\mathcal{F}_B\leq\mathcal{F}_A$;
 \item [(4)] if $|A\setminus B|=\omega$ then $\mathcal{F}_A\neq\mathcal{F}_B$
 \item [(5)] $(\mathcal{C}^0,\tau_{\mathcal{F}_A}^L)$ and $(\mathcal{C}^0,\tau_{\mathcal{F}_A}^R)$ are first-countable topological spaces.
 \end{itemize}
\end{lemma}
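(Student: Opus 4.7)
The plan is to prove the five assertions by leaning on one arithmetic observation: for distinct integers $m, n \geq 3$ one has $|m! - n!| > m + n$, so the ``blocks'' $[m!-m, m!+m]$ and $[n!-n, n!+n]$ are disjoint, and in particular $m! \in F_{A,0}$ if and only if $m \in A$ (for $m \geq 3$). This rigidity drives the whole argument.

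For (1), given a cofinite set $C$, let $D = \omega \setminus C$; only finitely many $n \in A$ satisfy $[n!-n, n!+n] \cap D \neq \emptyset$, and increasing $k$ above both their maximum and $\max D$ yields $F_{A,k} \subset C$, so cofinite sets belong to $\mathcal{F}_A$. For the shift condition, given $F_{A,k}$ and $n \in \mathbb{Z}$, take $H = F_{A,k+|n|}$: the $m$-block of $H$ has an extra $|n|$ of slack at either endpoint compared to the $m$-block of $F_{A,k}$, so shifting by $n$ (whose absolute value is $|n|$) preserves the inclusion, giving $n + H \subset F_{A,k}$. For (3), when $A \subset^* B$ and $F_{B,k} \in \mathcal{F}_B$ is given, choose $k' \geq k$ exceeding every element of the finite set $A \setminus B$; then every non-empty $m$-block of $F_{A,k'}$ has $m \in A \cap B$ with $m \geq k' \geq k$, and is contained in the corresponding $m$-block of $F_{B,k}$, so $F_{A,k'} \subset F_{B,k}$.

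For (2), assume $A \cap B$ is finite and fix $k \in \omega$. Since $B \setminus A$ is infinite, pick $m \in B \setminus A$ with $m \geq \max\{k, 3\}$; then $m! \in F_{B,k}$ but $m! \notin F_{A,0}$ by the rigidity observation, so $F_{B,k} \not\subset F_{A,0}$. Hence $F_{A,0} \notin \mathcal{F}_B$ and $\mathcal{F}_A \not\subset \mathcal{F}_B$; the symmetric argument shows $\mathcal{F}_B \not\subset \mathcal{F}_A$. For (4), with $|A \setminus B| = \omega$ and $k \in \omega$ arbitrary, pick $n \in A \setminus B$ with $n \geq \max\{k, 3\}$; then $n! \in F_{A,k}$ but $n! \notin F_{B,0}$, so $F_{A,k} \not\subset F_{B,0}$ for every $k$. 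Hence $F_{B,0} \in \mathcal{F}_B \setminus \mathcal{F}_A$ and $\mathcal{F}_A \neq \mathcal{F}_B$.

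Finally, (5) is immediate: the countable family $\{F_{A,k} : k \in \omega\}$ is by construction a base of $\mathcal{F}_A$ and hence provides a countable neighbourhood base at $0$ in both $\tau_{\mathcal{F}_A}^L$ and $\tau_{\mathcal{F}_A}^R$, while all other points are isolated. The main obstacle is the arithmetic rigidity inequality $|m! - n!| > m + n$ for distinct $m, n \geq 3$, which reduces to checking $m! - n! \geq (m-1)!(m-1) > 2m > m + n$ for $m > n \geq 3$. Once this single inequality is in hand, parts (2) and (4) each collapse to a one-line computation identifying the unique index whose block can contain $m!$, while (3) becomes a matter of truncating finitely many exceptional blocks by enlarging $k$.
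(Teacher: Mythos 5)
Your proof is correct and takes essentially the same route as the paper's: both arguments rest on the observation that the factorial blocks $[n!-n,n!+n]$ are pairwise disjoint (for all but trivially small indices), that shifting by $m$ is absorbed by passing from $F_{A,k}$ to $F_{A,k+|m|}$, and that enlarging $k$ trims the finitely many exceptional blocks. Your treatment of (2) and (4) via explicit non-containment of $F_{A,0}$ and $F_{B,0}$ is just a slightly more detailed rendering of the paper's one-line disjointness argument, so no further comment is needed.
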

\begin{proof}
(1). Fix any element $F_{A,n}\in\mathcal{F}_{A}$, $m\in \mathbb{Z}$ and $k\in\omega$. It is easy to check that $m+F_{A,n+|m|}\subset F_{A,n}$ and $F_{A,k+1}\subset \omega\setminus [0,k]$. Hence the filter $\mathcal{F}_{A}$ is shift-invariant.

(2). There exists $n\in\omega$ such that $A\cap B\subset [0,n]$. It is easy to check that $F_{A,n+1}\cap F_{B,n+1}=\emptyset$ which implies that the filters $\mathcal{F}_A$ and $\mathcal{F}_B$ are incomparable in the poset $\mathcal{SIF}$.

(3). There exists $n\in\omega$ such that $A\setminus B\subset [0,n]$. It is easy to check that $F_{A,k}\subset F_{B,k}$ for each $k\geq n+1$ which implies that $\mathcal{F}_B\leq\mathcal{F}_A$.

Statement (4) follows from the definition of the filters $\mathcal{F}_A$ and $\mathcal{F}_B$.

Observe that the filter $\mathcal{F}_A$ is first-countable. At this point statement (5) follows from the definition of the topologies $\tau_{\mathcal{F}_A}^L$ and $\tau_{\mathcal{F}_A}^R$.
\end{proof}

By $\mathcal{SCT}_{\omega}$ ($\mathcal{W}_{\omega}$, resp.) we denote the set of all (weak, resp.) Hausdorff shift-continuous first-countable topologies on $\mathcal{C}^0$. It is easy to check that $\mathcal{SCT}_{\omega}$ is a sublattice of $\mathcal{SCT}$.
A subset $A$ of a poset $X$ is called an {\em antichain} if each two distinct elements of $A$ are incomparable in $X$.

A set $A$ is called a {\em pseudo-intersection} of a family $\mathcal{F}\subset [\w]^{\w}$ if $A\subset^*F$ for each $F\in \mathcal{F}$.
A {\em tower} is a set $\mathcal{T}\subset [\w]^{\w}$ which is well-ordered with respect to the relation
defined by $x\leq y$ iff $y\subset^* x$. It is called {\em maximal} if it cannot be further extended, i.e. it has no
pseudointersection.

Denote $\mathfrak{t}=\min\{|\mathcal{T}|:\mathcal{T}$ is a maximal tower$\}$. By~\cite[Theorem 3.1]{vd}, $\omega_1\leq \mathfrak{t}\leq \mathfrak{c}$. Put $\hat{\mathfrak{t}}=\sup\{|\mathcal{T}|:\mathcal{T}$ is a maximal tower$\}$. Obviously, $\mathfrak{t}\leq \hat{\mathfrak{t}}\leq \mathfrak{c}$.

\begin{theorem}\label{card0} The poset $\mathcal{W}_{\omega}$ has the following properties:
\begin{itemize}
\item [(1)] $\mathcal{W}_{\omega}$ contains an antichain of cardinality $\mathfrak{c}$;
\item [(2)] For each ordinal $\kappa\in  \hat{\mathfrak{t}}$ the poset $\mathcal{W}_{\omega}$ contains a well-ordered chain of order type $\kappa$;
\item [(3)] $|\mathcal{W}_{\omega}|=|\mathcal{SCT}_{\omega}|=\mathfrak{c}$.
\end{itemize}
\end{theorem}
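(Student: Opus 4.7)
The plan is to reduce each of the three claims to a combinatorial statement about the shift-invariant filters $\mathcal{F}_A$ constructed in Lemma~\ref{lc1} from $A\in[\omega]^\omega$, and to transport the conclusion into $\mathcal{W}_\omega$ via the order isomorphism of Theorem~\ref{th1}. The central observation is that the assignment $A\mapsto \tau_{\mathcal{F}_A}^L$ sends $[\omega]^\omega$ into ${\downarrow}^\circ\tau_L\cap\mathcal{W}_\omega$ (the first-countability coming from Lemma~\ref{lc1}(5)), and the order on its image is governed by the almost-inclusion order on $[\omega]^\omega$ through parts~(2)--(4) of the same lemma.

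For~(1), I would fix an almost disjoint family $\mathcal{A}\subset[\omega]^\omega$ of cardinality $\mathfrak{c}$ (a classical ZFC construction). By Lemma~\ref{lc1}(2) the family $\{\mathcal{F}_A:A\in\mathcal{A}\}$ is an antichain in $\mathcal{SIF}$, and Theorem~\ref{th1} then promotes it to an antichain $\{\tau_{\mathcal{F}_A}^L:A\in\mathcal{A}\}\subset\mathcal{W}_\omega$ of cardinality~$\mathfrak{c}$.

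For~(2), given an ordinal $\kappa<\hat{\mathfrak{t}}$, the definition of $\hat{\mathfrak{t}}$ as a supremum supplies a maximal tower $\mathcal{T}$ with $|\mathcal{T}|>\kappa$; since $\mathcal{T}$ is well-ordered, its order type is an ordinal $>\kappa$, so its first $\kappa$ terms form a family $\{A_\alpha\mid \alpha<\kappa\}$ satisfying $A_\beta\subset^* A_\alpha$ and $|A_\alpha\setminus A_\beta|=\omega$ whenever $\alpha<\beta$. Lemma~\ref{lc1}(3)(4) then yields a strictly increasing chain $\{\mathcal{F}_{A_\alpha}:\alpha<\kappa\}$ in $\mathcal{SIF}$ of order type $\kappa$, and Theorem~\ref{th1} transports this to a first-countable well-ordered chain of the same order type in $\mathcal{W}_\omega$.

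For~(3), item~(1) already gives $|\mathcal{W}_\omega|\geq\mathfrak{c}$, and the reverse bound $|\mathcal{SCT}_\omega|\leq\mathfrak{c}$ follows from a direct count: since $|\mathcal{C}^0|=\omega$, a first-countable topology is specified by a countable neighborhood base at each of countably many points, giving at most $(\mathfrak{c}^\omega)^\omega=\mathfrak{c}$ topologies in total. The most delicate point is step~(2): one must ensure that strict descent inside the tower induces \emph{strict} ascent of the associated filters, which is precisely why Lemma~\ref{lc1}(4), and not merely part~(3), is indispensable; everything else is essentially bookkeeping on top of Lemma~\ref{lc1} and Theorem~\ref{th1}.
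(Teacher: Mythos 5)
Your proposal is correct and follows essentially the same route as the paper: an almost disjoint family pushed through Lemma~\ref{lc1}(2) and Theorem~\ref{th1} for the antichain, an initial segment of a long tower through Lemma~\ref{lc1}(3)--(5) for the well-ordered chains, and a counting argument for the cardinality. Your explicit bound $(\mathfrak{c}^\omega)^\omega=\mathfrak{c}$ on first-countable topologies over a countable set is in fact a slightly more careful justification of the upper bound for $|\mathcal{SCT}_\omega|$ than the paper's one-line remark about first-countable filters.
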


\begin{proof}
(1) Fix any almost disjoint family $\mathcal{A}\subset [w]^\w$ such that $|\mathcal{A}|=\mathfrak{c}$. By~\cite[Theorem~1.3]{Kunen}, such a family exists. Statement (2) of Lemma~\ref{lc1} implies that the set $\{\mathcal{F}_A\mid A\in\mathcal{A}\}$ forms an antichain in the poset $\mathcal{SIF}$. By Theorem~\ref{th1} (resp., \ref{th2}), the set $\{\tau_{\mathcal{F}_{A}}^L\mid A\in\mathcal{A}\}$ (resp., $\{\tau_{\mathcal{F}_{A}}^R\mid A\in\mathcal{A}\}$) is an antichain in the lattice $\mathcal{W}$. By statement (5) of Lemma~\ref{lc1}, the sets $\{\tau_{\mathcal{F}_{A}}^L\mid A\in\mathcal{A}\}$ and $\{\tau_{\mathcal{F}_{A}}^R\mid A\in\mathcal{A}\}$ are contained in $\mathcal{W}_\omega$

(2) Fix any ordinal $\kappa\in  \hat{\mathfrak{t}}$. By the definition of $\hat{\mathfrak{t}}$ there exists a tower $\mathcal{T}=\{T_{\alpha}\}_{\alpha\in \lambda}$ of length $\lambda>\kappa$. Observe that $|T_{\alpha}\setminus T_{\beta}|=\omega$ for each $\alpha< \beta< \lambda$. Statements (3), (4) and (5) of Lemma~\ref{lc1} implies that the sets $\{\tau_{\mathcal{F}_{T_{\alpha}}}^L\mid \alpha\in\kappa\}$ and $\{\tau_{\mathcal{F}_{T_{\alpha}}}^R\mid \alpha\in\kappa\}$ are well-ordered chains in $\mathcal{W}_\omega$ of order type $\kappa$.

(3) Observe that the cardinality of the set of all first countable filters on $\omega$ is equal to $\mathfrak{c}$. Statement (1) implies that $|\mathcal{SCT}_{\omega}|=|\mathcal{W}_{\omega}|=\mathfrak{c}$.
\end{proof}

Now we are going to show that each free filter on $\omega$ generates a shift-invariant filter on $\omega$.
Fix an arbitrary free filter $\mathcal{G}$ on $\omega$. For each $G\in\mathcal{G}$ and $k\in\omega$ put $F_{G,k}=\cup_{n\in G}[n!-n+k,n!+n-k]$. Observe that $F_{G,k}\cap F_{H,n}=F_{H\cap G,\max\{k,n\}}$ for each $G,H\in \mathcal{G}$ and $k,n\in \omega$. Since $H\cap G\in\mathcal{G}$ we obtain that the family $\{F_{G,k}\mid G\in\mathcal{G},k\in\omega\}$ is closed under finite intersections. By $\mathcal{F}_{\mathcal{G}}$ we denote the filter on $\omega$ which base consists of the sets $F_{G,k}$, $G\in\mathcal{G}$ and $k\in\omega$. A filter $\mathcal{F}$ on $\omega$ is called an {\em ultrafilter} if for each subset $A\subset\omega$ either $A\in \mathcal{F}$ or there exists $F\in \mathcal{F}$ such that $A\cap F=\emptyset$.

\begin{lemma}\label{lc2}
Let $\mathcal{G},\mathcal{H}$ be free filters on $\omega$. Then the following statements hold:
 \begin{itemize}
 \item [(1)] the filter $\mathcal{F}_{\mathcal{G}}$ is shift-invariant;
 \item [(2)] if $\mathcal{G}$ and $\mathcal{H}$ are ultrafilters, then the filters $\mathcal{F}_{\mathcal{G}}$ and $\mathcal{F}_{\mathcal{H}}$ are incomparable in the lattice $\mathcal{SIF}$.
 \item [(3)] if $\mathcal{G}\subset \mathcal{H}$ then $\mathcal{F}_{\mathcal{G}}\subset \mathcal{F}_{\mathcal{H}}$;
 \item [(4)] if  $\mathcal{G}\neq\mathcal{H}$ then $\mathcal{F}_{\mathcal{G}}\neq \mathcal{F}_{\mathcal{H}}$;
 \item [(5)] The character of the spaces $(\mathcal{C}^0,\tau_{\mathcal{F}_{\mathcal{G}}}^L)$ and $(\mathcal{C}^0,\tau_{\mathcal{F}_{\mathcal{G}}}^R)$ is equal to the character of the filter $\mathcal{F}_{\mathcal{G}}$.
 \end{itemize}
\end{lemma}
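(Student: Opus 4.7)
The plan is to address the five statements of Lemma~\ref{lc2} in sequence, adapting the scheme of Lemma~\ref{lc1} from filters associated with a single infinite set $A\in[\omega]^\omega$ to filters derived from an arbitrary free filter $\mathcal{G}$ on $\omega$.

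Items~(1), (3), and (5) should follow by short direct arguments. For~(1), I will verify the two defining conditions of shift-invariance separately: for the cofinite clause, each base element $F_{G,m+1}$ with $G\in\mathcal{G}$ lies inside $\omega\setminus[0,m]$ because every non-empty interval $[n!-n+m+1,n!+n-m-1]$ appearing in its definition has minimum at least $m+1$ (intervals with $n\leq m$ are empty); for the shift clause, the interval computation $m+F_{G,k+|m|}\subseteq F_{G,k}$ for $m\in\mathbb{Z}$ supplies the required witness. Item~(3) is immediate: if $\mathcal{G}\subseteq\mathcal{H}$, the base $\{F_{G,k}:G\in\mathcal{G},k\in\omega\}$ of $\mathcal{F}_{\mathcal{G}}$ is contained in the corresponding base of $\mathcal{F}_{\mathcal{H}}$. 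Item~(5) is immediate from the definition of $\tau^L_{\mathcal{F}_{\mathcal{G}}}$ and $\tau^R_{\mathcal{F}_{\mathcal{G}}}$, whose unique non-isolated point $0$ has a neighbourhood base naturally indexed by $\mathcal{F}_{\mathcal{G}}\times\omega$, so that the character at $0$ equals $\chi(\mathcal{F}_{\mathcal{G}})$.

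The heart of the argument, on which both (2) and (4) depend, is a separation property driven by the rapid growth of the factorial: for every $n\geq 3$ the only $m\in\omega$ satisfying $n!\in[m!-m,m!+m]$ is $m=n$. I would prove this by the estimates $m!-n!\geq n\cdot n!$ for $m\geq n+1$ (noting that $|n!-m!|\leq m$ already forces $m\geq n\cdot n!$, after which $m!$ dominates $n!+m$) and $n!-m!\geq (n-1)(n-1)!>n-1\geq m$ for $m\leq n-1$. The upshot is that whenever $F_{H,k}\subseteq F_{G,0}$ and $n\in H$ satisfies $n\geq\max(3,k+1)$, the membership $n!\in F_{H,k}\subseteq F_{G,0}$ forces $n\in G$.

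With this in hand, (2) follows by picking, for distinct free ultrafilters $\mathcal{G},\mathcal{H}$, a set $G\in\mathcal{G}$ with $H:=\omega\setminus G\in\mathcal{H}$; if $F_{G,0}$ lay in $\mathcal{F}_{\mathcal{H}}$, some $F_{H',k}\subseteq F_{G,0}$ with $H'\in\mathcal{H}$, and the infinite set $H\cap H'\in\mathcal{H}$ would contain arbitrarily large elements which the separation property places simultaneously into $G$ and $H$, a contradiction; symmetry finishes the argument. For (4), given $G\in\mathcal{G}\setminus\mathcal{H}$ and a hypothetical $F_{H,k}\subseteq F_{G,0}$ with $H\in\mathcal{H}$, the separation property gives $H\cap[N,\infty)\subseteq G$ for $N=\max(3,k+1)$; invoking the standard fact that every free filter on $\omega$ contains the Fr\'echet filter (for each $n<N$ pick by freeness some $F_n\in\mathcal{H}$ with $n\notin F_n$, and intersect), one gets $[N,\infty)\in\mathcal{H}$, so $G\supseteq H\cap[N,\infty)\in\mathcal{H}$ forces $G\in\mathcal{H}$, contradicting the choice of $G$. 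The main obstacle is the separation lemma underpinning (2) and (4); once the factorial-growth estimate is cleanly in place, the remaining bookkeeping is routine.
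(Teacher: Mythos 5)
Your proof is correct and follows essentially the same route as the paper: items (1), (3) and (5) are handled identically, and the factorial-separation fact underlying your items (2) and (4) is exactly the pairwise disjointness of the blocks $[n!-n+k,\,n!+n-k]$ that the paper uses, where for (2) it simply exhibits the disjoint witnesses $F_{A,2}\in\mathcal{F}_{\mathcal{G}}$ and $F_{B,2}\in\mathcal{F}_{\mathcal{H}}$ for disjoint $A\in\mathcal{G}$, $B\in\mathcal{H}$, and declares (4) straightforward. Your contradiction arguments for (2) and (4) merely spell out in detail what the paper leaves as ``easy to see''.
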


\begin{proof}
(1) Fix any element $F_{G,n}\in\mathcal{F}_{\mathcal{G}}$, $m\in \mathbb{Z}$ and $k\in\omega$. It is easy to check that $m+F_{G,n+|m|}\subset F_{G,n}$ and $F_{G,(k+1)!}\subset \omega\setminus [0,k]$. Hence the filter $\mathcal{F}_{\mathcal{G}}$ is shift-invariant.

(2) Assume that $\mathcal{G}$ and $\mathcal{H}$ are ultrafilters on $\w$. Then there exists $A\in \mathcal{G}$ and $B\in \mathcal{H}$ such that $A\cap B=\emptyset$. It is easy to see that the sets $F_{A,2}\in\mathcal{F}_{\mathcal{G}}$ and $F_{B,2}\in\mathcal{F}_{\mathcal{H}}$ are disjoint which implies that the filters $\mathcal{F}_{\mathcal{G}}$ and $\mathcal{F}_{\mathcal{H}}$ are incomparable in the poset $\mathcal{SIF}$.

Statement (3) follows from the definition of the filters $\mathcal{F}_{\mathcal{G}}$ and $\mathcal{F}_{\mathcal{H}}$.

The proof of statement (4) is straightforward.

Statement (5) follows from the definition of the topologies $\tau_{\mathcal{F}_{\mathcal{G}}}^L$ and $\tau_{\mathcal{F}_{\mathcal{G}}}^R$.
\end{proof}

By $\mathcal{FF}$ we denote the set of all free filters on $\omega$ endowed with the natural partial order: $\mathcal{F}\leq \mathcal{G}$ iff $\mathcal{F}\subset\mathcal{G}$.

\begin{lemma}\label{lf}
The poset $\mathcal{FF}$ contains a well-ordered chain of cardinality $\mathfrak{c}$.
\end{lemma}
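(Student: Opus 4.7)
The plan is to realize the desired chain as a sequence of filters obtained by progressively adjoining members of an independent family on $\omega$. Specifically, I will invoke the classical Fichtenholz--Kantorovich theorem (see, e.g., \cite{Kunen}) which guarantees the existence of an \emph{independent family} $\{A_\alpha\mid \alpha<\mathfrak{c}\}\subseteq [\omega]^{\omega}$ of cardinality $\mathfrak{c}$; independence here means that for any two disjoint finite subsets $S,T\subseteq \mathfrak{c}$ the set $\bigcap_{\alpha\in S}A_\alpha\cap \bigcap_{\alpha\in T}(\omega\setminus A_\alpha)$ is infinite. This family will serve as a ``generating skeleton'' for the chain.

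For each ordinal $\alpha<\mathfrak{c}$, I set $\mathcal{G}_\alpha$ to be the filter on $\omega$ generated by all cofinite subsets of $\omega$ together with $\{A_\beta\mid \beta<\alpha\}$. Independence immediately implies that every finite intersection of generators is infinite, so $\mathcal{G}_\alpha$ is a proper filter; since it contains the Fr\'echet filter it is free, and hence $\mathcal{G}_\alpha\in\mathcal{FF}$. The sequence $(\mathcal{G}_\alpha)_{\alpha<\mathfrak{c}}$ is clearly nondecreasing in the order on $\mathcal{FF}$.

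The crucial step is strict increase at successor stages: $\mathcal{G}_\alpha\subsetneq\mathcal{G}_{\alpha+1}$, witnessed by $A_\alpha$. If one had $A_\alpha\in\mathcal{G}_\alpha$, then there would exist a cofinite set $C\subseteq\omega$ and finitely many indices $\beta_1,\ldots,\beta_n<\alpha$ with $C\cap A_{\beta_1}\cap\cdots\cap A_{\beta_n}\cap(\omega\setminus A_\alpha)=\emptyset$; but the intersection $A_{\beta_1}\cap\cdots\cap A_{\beta_n}\cap(\omega\setminus A_\alpha)$ is infinite by independence and therefore meets the cofinite set $C$, a contradiction. Consequently $(\mathcal{G}_\alpha)_{\alpha<\mathfrak{c}}$ is a strictly increasing, hence well-ordered, chain in $\mathcal{FF}$ whose order type is $\mathfrak{c}$.

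The only non-routine ingredient is the appeal to a size-$\mathfrak{c}$ independent family, a classical theorem of set theory; the remaining verifications are immediate consequences of independence, and no delicate handling at limit stages is needed since the generating set at a limit is simply the union of the generating sets below, which still satisfies the independence requirement.
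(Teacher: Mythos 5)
Your proof is correct, but it takes a genuinely different route from the one in the paper. The paper starts from a free ultrafilter $\mathcal{F}$ on $\omega$ with $\chi(\mathcal{F})=\mathfrak{c}$ (Pospíšil's theorem, cited from van Mill) and builds the chain as an increasing transfinite approximation of $\mathcal{F}$ from below: at each successor stage it adjoins the first member of a fixed base of $\mathcal{F}$ not yet captured, and the existence of such a member is guaranteed by a counting argument (a filter generated by fewer than $\mathfrak{c}$ sets cannot contain a whole base of $\mathcal{F}$, since $\chi(\mathcal{F})=\mathfrak{c}$). You instead take a Fichtenholz--Kantorovich independent family $\{A_\alpha\mid\alpha<\mathfrak{c}\}$ and let $\mathcal{G}_\alpha$ be generated by the Fr\'echet filter together with $\{A_\beta\mid\beta<\alpha\}$; independence makes each $\mathcal{G}_\alpha$ a proper free filter and, more importantly, witnesses strict growth at every successor stage directly, since $A_{\beta_1}\cap\cdots\cap A_{\beta_n}\cap(\omega\setminus A_\alpha)$ is infinite and hence meets any cofinite set. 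Both arguments are classical and of comparable length. Your version is arguably the more self-contained of the two: the character-$\mathfrak{c}$ ultrafilter used by the paper is itself usually constructed from an independent family, so you are in effect cutting out the middleman, and your strictness argument is explicit rather than a cardinality count. The paper's version has the mild advantage that the whole chain sits inside a single ultrafilter, but nothing in the application (Theorem~\ref{card1}) uses that. Your handling of limit stages is also fine as stated, since the filter generated at a limit is exactly the union of the preceding ones.
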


\begin{proof}
By Theorem 4.4.2 from~\cite{vm}, there exists a free ultrafilter $\mathcal{F}$ on $\w$ such that $\chi(\mathcal{F})=\mathfrak{c}$. Fix any base $\mathcal{B}=\{F_{\alpha}\}_{\alpha\in \mathfrak{c}}$ of $\mathcal{F}$.

Now we are going to construct a well-ordered chain $\mathcal{L}=\{\mathcal{G}_{\alpha}\}_{\alpha\in \mathfrak{c}}$ in $\mathcal{FF}$. Let $\mathcal{G}_0$ be a filter which consists of cofinite subsets of $\omega$. Since the filter $\mathcal{F}$ is free, $\mathcal{G}_0\subset \mathcal{F}$. Assume that for all ordinals $\beta<\alpha<\mathfrak{c}$ the filters $\mathcal{G}_{\beta}$ are already constructed.
There are two cases to consider:
\begin{itemize}
\item[(1)] $\alpha$ is a limit ordinal;
\item[(2)] $\alpha$ is a successor ordinal.
\end{itemize}
In case (1) put $\mathcal{G}_{\alpha}=\cup_{\beta\in \alpha}\mathcal{G}_{\beta}$.

In case (2) $\alpha=\delta+1$ for some ordinal $\delta$. Let $\gamma_{\delta}$ be the smallest ordinal such that $F_{\gamma}\notin \mathcal{G}_{\delta}$.
Let $G_{\alpha}$ be a filter generated by
the smallest family $X$ of subsets of $\omega$ such that $\mathcal{G}_{\delta}\subset X$, $F_{\gamma_{\delta}}\in X$ and $X$ is closed under finite intersections.

Observe that $|\mathcal{G}_{\delta}|\leq\max\{\omega,|\delta|\}<\mathfrak{c}$. Hence for each $\delta\in\mathfrak{c}$ the ordinal $\gamma_{\delta}$ exists, because in the other case the family $\mathcal{G}_{\delta}$ will be a base of the filter $\mathcal{F}$ of cardinality less then $\mathfrak{c}$ which contradicts our assumption.

It is easy to see that the family $\mathcal{L}=\{\mathcal{G}_{\alpha}\}_{\alpha\in\mathfrak{c}}$ is a well-ordered chain in $\mathcal{FF}$.
\end{proof}


\begin{theorem}\label{card1} The poset $\mathcal{W}$ has the following properties:
\begin{itemize}
\item [(1)] $\mathcal{W}$ contains an antichain of cardinality $2^\mathfrak{c}$;
\item [(2)] $\mathcal{W}$ contains a well-ordered chain of cardinality $\mathfrak{c}$;
\item [(3)] $|\mathcal{W}|=|\mathcal{SCT}|=2^\mathfrak{c}$.
\end{itemize}
\end{theorem}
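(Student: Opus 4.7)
The plan is to reduce everything to statements about $\mathcal{SIF}$ via Theorem~\ref{th3}, which identifies $\mathcal{W}$ with $\mathcal{SIF}^1{\times}\mathcal{SIF}^1$. All three conclusions will follow by transporting antichains, chains, and cardinality bounds across the embedding $\mathcal{F}\mapsto(\mathcal{F},1)$ from $\mathcal{SIF}$ into $\mathcal{SIF}^1{\times}\mathcal{SIF}^1$, or equivalently, identifying a shift-invariant filter $\mathcal{F}$ with the weak topology $\tau_{\mathcal{F}}^L\in{\downarrow}^{\circ}\tau_L\subset \mathcal{W}$ supplied by Theorem~\ref{th1}. Note that if $\mathcal{F}_1,\mathcal{F}_2\in\mathcal{SIF}$ are incomparable (resp.\ satisfy $\mathcal{F}_1\subsetneq\mathcal{F}_2$), then so are the pairs $(\mathcal{F}_1,1)$ and $(\mathcal{F}_2,1)$ in the product order on $\mathcal{SIF}^1{\times}\mathcal{SIF}^1$; this is immediate from the definitions.

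For~(1), I would invoke the classical Pospíšil theorem that there exist $2^{\mathfrak{c}}$ pairwise distinct free ultrafilters on $\omega$; let $\{\mathcal{U}_\alpha:\alpha<2^{\mathfrak{c}}\}$ enumerate them. By Lemma~\ref{lc2}(1) each $\mathcal{F}_{\mathcal{U}_\alpha}$ belongs to $\mathcal{SIF}$, and by Lemma~\ref{lc2}(2) the family $\{\mathcal{F}_{\mathcal{U}_\alpha}:\alpha<2^{\mathfrak{c}}\}$ is an antichain in $\mathcal{SIF}$. Transferring through Theorem~\ref{th1}, the topologies $\{\tau_{\mathcal{F}_{\mathcal{U}_\alpha}}^L:\alpha<2^{\mathfrak{c}}\}$ form an antichain of size $2^{\mathfrak{c}}$ inside ${\downarrow}\tau_L\subset\mathcal{W}$.

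For~(2), I would apply Lemma~\ref{lf} to obtain a well-ordered chain $\{\mathcal{G}_\alpha:\alpha<\mathfrak{c}\}$ in $\mathcal{FF}$. Lemma~\ref{lc2}(3) shows that the assignment $\mathcal{G}\mapsto \mathcal{F}_{\mathcal{G}}$ is order preserving, and Lemma~\ref{lc2}(4) shows it is injective, so $\{\mathcal{F}_{\mathcal{G}_\alpha}:\alpha<\mathfrak{c}\}$ is a well-ordered chain of cardinality $\mathfrak{c}$ in $\mathcal{SIF}$. Applying Theorem~\ref{th1} once more produces the desired well-ordered chain $\{\tau_{\mathcal{F}_{\mathcal{G}_\alpha}}^L:\alpha<\mathfrak{c}\}$ in $\mathcal{W}$.

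For~(3), the lower bound $|\mathcal{W}|\geq 2^{\mathfrak{c}}$ comes for free from~(1). The upper bound is a trivial counting argument: any topology on $\mathcal{C}^0$ is an element of $\mathcal{P}(\mathcal{P}(\mathcal{C}^0))$, and since $|\mathcal{C}^0|=\omega$ this cardinal is $2^{2^{\aleph_0}}=2^{\mathfrak{c}}$. Since $\mathcal{W}\subseteq\mathcal{SCT}$, this gives $|\mathcal{W}|=|\mathcal{SCT}|=2^{\mathfrak{c}}$. There is no real obstacle in the argument, as the main work has already been done in Lemmas~\ref{lc2} and~\ref{lf}; the only point to verify carefully is that the order-theoretic embedding $\mathcal{SIF}\hookrightarrow\mathcal{SIF}^1{\times}\mathcal{SIF}^1$ via $\mathcal{F}\mapsto(\mathcal{F},1)$ preserves both strict inequality and incomparability, which is routine.
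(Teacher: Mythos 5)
Your proposal is correct and follows essentially the same route as the paper: part (1) via the $2^{\mathfrak{c}}$ ultrafilters together with Lemma~\ref{lc2}(1),(2) and Theorem~\ref{th1}; part (2) via Lemma~\ref{lf} and Lemma~\ref{lc2}(3),(4); part (3) by combining the lower bound from (1) with the trivial upper bound $|\mathcal{SCT}|\leq 2^{\mathfrak{c}}$. The only differences are presentational (you name Posp\'{\i}\v{s}il's theorem and spell out the counting bound that the paper dismisses as obvious).
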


\begin{proof}
It is well-known that there are $2^{\mathfrak{c}}$ ultrafilters on $\omega$. Hence statement (1) follows from statements (1) and (2) of Lemma~\ref{lc2} and Theorem~\ref{th1}.

Consider statement (2). By Lemma~\ref{lf}, there exists a well-ordered chain $\mathcal{L}\subset \mathcal{FF}$ of cardinality $\mathfrak{c}$. Statements (3) and (4) of Lemma~\ref{lc2} imply that the family $\mathcal{Z}=\{\mathcal{F}_{\mathcal{G}}\mid \mathcal{G}\in\mathcal{L}\}$ is a well-ordered chain in $\mathcal{SIF}$. By Theorem~\ref{th1}, the poset $\mathcal{W}$ contains a well-ordered chain of cardinality $\mathfrak{c}$.


Consider statement (3). Obviously, $|\mathcal{W}|\leq |\mathcal{SCT}|\leq 2^{\mathfrak{c}}$. Then statement (1) yields the desired equality.
\end{proof}


\begin{thebibliography}{00}

\bibitem{Andersen-1952}
O.~Andersen,
{\em Ein Bericht \"{u}ber die Struktur abstrakter Halbgruppen},
PhD Thesis, Hamburg, 1952.

\bibitem{Anderson-Hunter-Koch-1965}
L.~W.~Anderson, R.~P.~Hunter and R.~J.~Koch,
\emph{Some results on stability in semigroups}.
Trans. Amer. Math. Soc. {\bf 117} (1965), 521--529.

\bibitem{ADM}
L. Außenhofer, D. Dikranjan and E. Martin-Peinador,
{\it Locally Quasi-Convex Compatible Topologies on a Topological Group,}
Axioms {\bf 4}:4 (2015), 436--458.

\bibitem{Banakh-Dimitrova-Gutik-2009}
T.~Banakh, S.~Dimitrova and O.~Gutik,
\emph{The Rees-Suschkiewitsch Theorem for simple
topological semigroups}, Mat. Stud. \textbf{31}:2 (2009), 211--218.

\bibitem{Banakh-Dimitrova-Gutik-2010}
T.~Banakh, S.~Dimitrova and O.~Gutik,
\emph{Embedding the bicyclic semigroup into countably compact topological semigroups},
Topology Appl. \textbf{157}:18 (2010), 2803--2814.


\bibitem{Bardyla-2016}
S. Bardyla,
\emph{Classifying locally compact semitopological polycyclic monoids},
Mat. Visn. Nauk. Tov. Im. Shevchenka \textbf{13} (2016),  13--28.


\bibitem{Bardyla-2017(2)}
S.~Bardyla,
\emph{On universal objects in the class of graph inverse semigroups},
European Journal of Mathematics, to appear, DOI: 10.1007/s40879-018-0300-7.

\bibitem{Bardyla-2018(1)}
S.~Bardyla,
\emph{On locally compact semitopological graph inverse semigroups},
Matematychni Studii. {\bf 49} (2018), no.1, 19–-28.

\bibitem{Bardyla-2018}
S. Bardyla: {\em On locally compact shift-continuous topologies on the $\alpha$-bicyclic monoid},
Topological Algebra and its Applications. {\bf 6} (2018), no.1, 34--42.

\bibitem{Bardyla-2018(2)}
S.~Bardyla,
\emph{Embeddings of graph inverse semigroups into compact-like topological semigroups},
preprint, (2019), arXiv:1810.09169.

\bibitem{Bardyla-Gutik-2016}
S. Bardyla, O. Gutik,
\emph{On a complete topological inverse polycyclic monoid},
Carpathian Math. Publ. \textbf{8}:2 (2016), 183--194. DOI: 10.15330/cmp.8.2.183-194.


\bibitem{BR}
S. Bardyla, A. Ravsky
\emph{Closed subsets of compact-like topological spaces},
preprint, arXiv:1907.12129.

\bibitem{BDFW}
A. Berarducci, D. Dikranjan, M. Forti and S. Watson,
{\it Cardinal invariants and independence results
in the lattice of precompact group topologies,}
 J. Pure Appl. {\bf 126} (1998), 19–-49.


\bibitem{Bertman-West-1976}
M.~Bertman, T.~West,
{\it Conditionally compact bicyclic
semitopological semigroups},
Proc. Roy. Irish Acad. {\bf A76} (1976), 219--226.



\bibitem{CR}
W. Comfort, D. Remus,
{\it Long chains of Hausdorff topological group topologies,}
 J. Pure Appl. Algebra {\bf 70} (1991), 53–-72.

\bibitem{CR1}
W. Comfort, D. Remus,
{\it Long chains of topological group topologies— A continuation,}
 Topology Appl. {\bf 75} (1997), 51–-79.

\bibitem{D}
D. Dikranjan,
{\it The Lattice of Compact Representations of an infinite group.}
In Proceedings of
Groups 93, Galway/St Andrews Conference, London Math. Soc. Lecture Notes 211; Cambidge
Univ. Press: Cambridge, UK, 1995, pp. 138–-155.

\bibitem{D1}
D. Dikranjan,
{\it On the poset of precompact group topologies.}
In Topology with Applications,
Proceedings of the 1993 Szekszard (Hungary) Conference, Bolyai Society Mathematical Studies;
Czaszar, A., Ed.; Elsevier: Amsterdam, The Netherlands, 1995; Volume 4, pp. 135–-149.


\bibitem{D2}
D. Dikranjan,
{\it Chains of pseudocompact group topologies,}
J. Pure Appl. Algebra {\bf 124} (1998), 65–-100.


\bibitem{vd} E.K.~van Douwen, {\em The integers and topology}, in: Handbook of set-theoretic topology, North-Holland, Amsterdam, (1984), 111--167.






\bibitem{Eberhart-Selden-1969}
C.~Eberhart, J.~Selden,
{\em On the closure of the bicyclic semigroup},
Trans. Amer. Math. Soc. {\bf 144} (1969), 115--126.


\bibitem{Engelking-1989}
R.~Engelking,
\emph{General Topology},
2nd ed., Heldermann, Berlin, 1989.





\bibitem{Gutik-2015}
O. Gutik,
\emph{On the dichotomy of a locally compact semitopological bicyclic monoid with adjoined zero},
Visn. L'viv. Univ., Ser. Mekh.-Mat. \textbf{80} (2015), 33--41.


\bibitem{Gutik-2018}
O. Gutik,
\emph{On locally compact semitopological 0-bisimple inverse
$\omega$-semigroups},
Topol. Algebra Appl. {\bf 6} (2018), no.1, 77--101.

\bibitem{Gutik-Repovs-2007}
O.~Gutik, D.~Repov\v{s},
\emph{On countably compact $0$-simple topological inverse semigroups},
Semigroup Forum \textbf{75}:2 (2007), 464--469.

\bibitem{Hildebrant-Koch-1988}
J.~A.~Hildebrant, R.~J.~Koch,
{\it Swelling actions of $\Gamma$-compact semigroups}, Semigroup
Forum {\bf 33} (1986), 65--85.



\bibitem{Kunen}
K. Kunen
\emph{Set theory},
Studies in Logic and the Foundations of Mathematics, vol. 102, North-Holland
Publishing Company, (1980).

\bibitem{Lawson-2009}
M.~V.~Lawson,
\emph{Primitive partial permutation representations of the polycyclic monoids and branching function systems},
Period. Math. Hungar. \textbf{58} (2009), 189--207.


\bibitem{vm} J.~van Mill, {\em An introduction to $\beta(\omega)$}, in: Handbook of set-theoretic topology, North-Holland, Amsterdam, (1984), 503--568.

\bibitem{Mokr}
T. Mokrytskyi,
\emph{On the dichotomy of a locally compact semitopological monoid of order isomorphisms between principal filters of $\mathbb{N}^n$ with adjoined zero}, preprint, arXiv:1908.08521.












\end{thebibliography}
\end{document}